\newtheorem{theorem}{Theorem}[section]
\newtheorem{lemma}{Lemma}[section]
\newtheorem{definition}{Definition}[section]
\newtheorem{remark}{Remark}[section]
\newcommand{\beq}{\begin{equation}}
\newcommand{\eeq}{\end{equation}}
\newcommand{\beqn}{\begin{eqnarray}}
\newcommand{\eeqn}{\end{eqnarray}}
\def\d{{\delta}}
\begin{document}

\title{Permanence and  almost periodic solutions for a single-species system with impulsive effects on time scales\thanks{This work is supported by the National
Natural Sciences Foundation of People's Republic of China under
Grant 11361072.}}
\author {Yongkun Li\thanks{%
The corresponding author.}, Pan Wang and Bing Li\\
Department of Mathematics, Yunnan University\\
Kunming, Yunnan 650091\\
 People's Republic of China}
\date{}
\maketitle{}
\begin{abstract}
In this paper, we first propose a single-species system with impulsive effects on time scales and by establishing some new comparison theorems of impulsive  dynamic equations  on time scales, we obtain sufficient conditions  to guarantee the permanence of the system.
Then we prove a Massera type theorem for impulsive  dynamic equations  on time scales and based on this theorem,  we establish a criterion for the existence and uniformly asymptotic stability of unique positive almost periodic solution of the system. Finally, we give an example  to show the feasibility of our main results. Our example also shows that the continuous time system and its corresponding discrete time system have the same dynamics. Our results of this paper are completely new.
\end{abstract}
{\bf Key words:} Impulsive single-species model; Comparison theorem; Permanence; Almost periodic solution; Time scales.

\section{Introduction}

\setcounter{equation}{0}
{\setlength\arraycolsep{2pt}}
 \indent

In 1978, Ludwig et al. \cite{LJH} introduced the following single-species system:
{\setlength\arraycolsep{2pt}\begin{eqnarray*}
x'(t)=x(t)[a-bx(t)]-h(x),
\end{eqnarray*}}
where $x(t)$ is the density of species $x$ at time $t$, $a$ and $b$ are the intrinsic growth rate and self-inhibition rate, respectively. The
$h(x)$-term represents predation. Predation is an increasing function and usually saturates for large enough $x$. If the density of
species $x$ is small, then the predation term $h(x)$ drops rapidly. To investigate the effects of other specific forms of $h(x)$, Murray \cite{M1} took  $h(x)=\frac{\alpha x^{2}(t)}{\beta+x^{2}(t)}$  and the authors of \cite{TLGL, TLC} took $h(x)=\frac{cx(t)}{d+x(t)}$.

Since, in reality, many natural and man-made factors (e.g., fire, drought, flooding deforestation, hunting, harvesting,
breeding etc.) always lead to rapid decrease or increase of population number at fixed times--such sudden changes can often
be characterized mathematically in the form of impulses,   the authors of \cite{TLC} considered the following single-species system governed by the impulsive differential equation:
{\setlength\arraycolsep{2pt}
\begin{eqnarray}\label{e11}
 \left\{%
\begin{array}{lcrcl}
x'(t)=x(t)[a(t)-b(t)x(t)]-\displaystyle\frac{c(t)x(t)}{d(t)+x(t)},\,\,t\neq t_{k},\\
x(t_{k}^{+})=(1+\lambda_{k})x(t_{k}),\,\,t=t_{k},\,\,k\in\mathbb{N},
\end{array}\right.
\end{eqnarray}}
where $x(0)>0$, $t_{k}$ is an impulsive point for every $k$ and $0\leq t_{0} <t_{1}<t_{2}<\ldots<t_{k}<\ldots$, and $\mathbb{N}$ is the set of positive integers, the coefficients $a(t)$, $b(t)$, $c(t)$, $d(t)$ are positive continuous $T$-periodic functions for $t\geq0$, the jump condition reflects the possibility of
impulsive perturbations on species $x$,   $\{\lambda_{k}\}$ is assumed to be a real sequence with $\lambda_{k}>-1$ and there exists an integer $q> 0$ such that
$\lambda_{k+q}=\lambda_{k}$, $t_{k+q}=t_{k}+T$.  By using Brouwer¡¯s fixed point theorem and the Lyapunov function, sufficient conditions for
the existence and global asymptotic stability of positive periodic solutions of the system were derived.

It is well known that biological and environmental parameters are naturally subject to fluctuation in time, the effects of a periodically or almost periodically varying environment are considered as important selective forces on systems in a fluctuating environment. Therefore, on the one hand, models should take into account the seasonality of the periodically changing environment.
However, on the other hand, if the various constituent components of the temporally nonuniform environment is with incommensurable
(nonintegral multiples) periods, then one has to consider the environment to be almost periodic since
there is no a priori reason to expect the existence of periodic solutions. For this reason, the assumption of
almost periodicity is more realistic, more important and more general when we consider the effects of the
environmental factors. Also, at present, few results are available for the existence of positive almost periodic solutions to population models with impulses.

Meanwhile, discrete time models governed by difference equations are very important in implementation and applications, so it is significant to study the discrete time models. As we known, the study of dynamical systems on time scales can unify and extend continuous and discrete analysis \cite{BP1}, which is  now an active area of research. In recent years, a variety of dynamic equations on time scales have been investigated (see \cite{ABLZ,BP1,BP2,BT,HF,LW, PKB,SF, ZL}). However, only few papers \cite{LYZ,ZZ,ZDL}  published on the permanence  for dynamic equation models on time scales, and up to now, there is no paper published on the permanence for impulsive dynamic equation models on time scales.  Thus, it is worthwhile continuing to study the single-species system with impulsive effects on time scales.

Motivated by the above reasons, in this paper, we are concerned with the following single-species system with impulsive effects on time scales:
{\setlength\arraycolsep{2pt}
\begin{eqnarray}\label{e12}
 \left\{%
\begin{array}{lcrcl}
x^{\Delta}(t)=a(t)-b(t)e^{x(t)}-\displaystyle\frac{c(t)}{d(t)+m(t)e^{x(t)}},\,\,t\neq t_{k},\,\,t\in[t_{0}, +\infty)_{\mathbb{T}},\\
x(t_{k}^{+})=x(t_{k})\ln(1+\lambda_{k}),\,\,t=t_{k},\,\,k\in\mathbb{N},
\end{array}\right.
\end{eqnarray}}
where $\mathbb{T}$ is an almost periodic time scale, $0\leq t_{0}\in\mathbb{T}$.

\begin{remark}\label{r1}
Let $y(t)=e^{x(t)}$, if $\mathbb{T}=\mathbb{R}$, then system $\eqref{e12}$ is reduced to the following system:
{\setlength\arraycolsep{2pt}
\begin{eqnarray}\label{eq13}
 \left\{%
\begin{array}{lcrcl}
y'(t)=y(t)[a(t)-b(t)y(t)]-\displaystyle\frac{c(t)y(t)}{d(t)+m(t)y(t)},\,\,t\neq t_{k},\,\,t\in[t_{0}, +\infty),\\
y(t_{k}^{+})=(1+\lambda_{k})y(t_{k}),\,\,t=t_{k},\,\,k\in\mathbb{N},
\end{array}\right.
\end{eqnarray}}
let $m(t)\equiv1$, then system $\eqref{e12}$ is reduced to system $\eqref{e11}$.
If $\mathbb{T}=\mathbb{Z}$, then system $\eqref{e12}$ is reduced to the following system:
{\setlength\arraycolsep{2pt}
\begin{eqnarray}\label{e14}
 \left\{%
\begin{array}{lcrcl}
y(n+1)=y(n)\exp\bigg\{a(n)-b(n)y(n)-\displaystyle\frac{c(n)}{d(n)+m(n)y(n)}\bigg\},\,\,n\neq n_{k},\\
y(n_{k}^{+})=(1+\lambda_{k})y(n_{k}),\,\,n=n_{k},\,\,k\in\mathbb{N},\,\,n\in[t_{0}, +\infty)_{\mathbb{Z}}.
\end{array}\right.
\end{eqnarray}}
\end{remark}

From the point of view of biology, we focus our discussion on the positive solutions of system $\eqref{e12}$. So it is assumed that the initial condition  of system $\eqref{e12}$ is of the form
$x(t_{0})>0.$

For convenience, we denote
\begin{eqnarray*}
 f^l=\inf_{t\in{\mathbb{T}}}f(t),\quad f^u=\sup_{t\in{\mathbb{T}}}f(t),
\end{eqnarray*}
where $f$ is an almost periodic function on $\mathbb{T}$.

Throughout this paper, we assume that
\begin{enumerate}
  \item [$(H_{1})$]
  $a(t), b(t), c(t), d(t), m(t)$ are all bounded nonnegative almost periodic functions on $\mathbb{T}$ such that $a^{l}>0$, $b^{l}>0$, $c^{l}>0$, $d^{l}\geq1$ and $m^{l}>0$.
  \item [$(H_{2})$]
  $\{\lambda_{k}\}$ is an almost periodic sequence and there exists positive constant $r$ such that $0<r\leq\prod_{t_{0}<t_{k}<t}\ln(1+\lambda_{k})\leq 1$ for $t\geq t_{0}$ and $0<\lambda_{k}\leq e-1$ for $k\in\mathbb{N}$;
  \item [$(H_{3})$]
  the set of sequences $\{t_{k}^{j}\}$, $t_{k}^{j}=t_{k+j}-t_{k}$, $k, j\in\mathbb{N}$ is uniformly almost periodic and $\inf_{k}t_{k}^{1}=\theta>0$;
  \item [$(H_{4})$]
  $a^{u}>b^{l}$, $(a^{l}-c^{u})r>b^{u}$, $-a^{l}+c^{u}\in\mathcal{R}^{+}$.
\end{enumerate}

The main purpose of this paper is to discuss the permanence of system $\eqref{e12}$ by establishing some new comparison theorems of impulsive dynamic equations on time scales and based on the permanence result, by   establishing  a  Massera \cite{M} type theorem of impulsive dynamic equations on time scales, we obtain the existence and uniformly asymptotic stability of unique positive almost periodic
solution of system $\eqref{e12}$ on time scales. To the best of our knowledge, this is the first time to study the permanence and almost periodicity of system \eqref{eq13} and system \eqref{e14}, and is the first time to  study the permanence of impulsive dynamic equations on time scales.

The organization of this paper is as follows: In Section 2, we introduce some notations and definitions, state some preliminary results which are needed
in later sections and establish some new comparison theorems. In Section 3, we establish some sufficient conditions for the permanence of $\eqref{e12}$. In Section 4, we prove
 a Massera¡¯s type theorem for impulsive  dynamic equations on time scales and apply this theorem to obtain some sufficient conditions for the existence and uniformly asymptotic stability of unique positive almost periodic solution of $\eqref{e12}$. In Section 5, we give an example to illustrate the feasibility and effectiveness of our results obtained in previous sections. We draw a conclusion in Section 6.

\section{Preliminaries and comparison theorems}
\setcounter{equation}{0}
{\setlength\arraycolsep{2pt}}
 \indent

In this section, we shall recall some basic definitions, lemmas which are used in what follows.

A time scale $\mathbb{T}$ is an arbitrary nonempty closed subset of the real numbers, the forward and backward jump operators $\sigma$, $\rho:\mathbb{T}\rightarrow \mathbb{T}$ and the forward graininess $\mu:\mathbb{T}\rightarrow \mathbb{R}^{+}$ are defined, respectively, by
\[
\sigma(t):=\inf \{s\in\mathbb{T}:s> t\},\,\,\rho(t):=\sup\{s\in\mathbb{T}:s<t\}\,\,
\text{and}\,\,\mu(t)=\sigma(t)-t.
\]

A point $t$ is said to be left-dense if $t>\inf\mathbb{T}$ and $\rho(t)=t$, right-dense if $t<\sup\mathbb{T}$ and $\sigma(t)=t$, left-scattered if $\rho(t)<t$ and right-scattered if $\sigma(t)>t$. If $\mathbb{T}$ has a left-scattered maximum $m$, then $\mathbb{T}^{k}=\mathbb{T}\backslash m$, otherwise $\mathbb{T}^{k}=\mathbb{T}$. If $\mathbb{T}$ has a right-scattered minimum $m$, then $\mathbb{T}_{k}=\mathbb{T}\backslash m$, otherwise $\mathbb{T}^{k}=\mathbb{T}$.

A function $f : \mathbb{T}\rightarrow \mathbb{R}$ is right-dense continuous or rd-continuous provided it is continuous at right-dense points in $\mathbb{T}$ and its left-sided limits exist (finite) at left-dense points in $\mathbb{T}$. If $f$ is continuous at each right-dense point and each left-dense point, then $f$ is said to be a continuous function on $\mathbb{T}$.

For $f:\mathbb{T}\rightarrow\mathbb{R}$ and $t\in{\mathbb{T}^{k}}$, then $f$ is called delta differentiable at $t\in{\mathbb{T}}$ if there exists $c\in\mathbb{R}$ such that for given any $\varepsilon>{0}$, there is an open neighborhood $U$ of  $t$ satisfying
\[
\left|[f(\sigma(t))-f(s)]-c[\sigma(t)-s]\right|\leq\varepsilon\left|\sigma(t)-s\right|,
\]
for all $s\in U$. In this case, $c$ is called the delta derivative of $f$ at $t\in{\mathbb{T}}$, and is denoted by $c=f^{\Delta}(t)$. For $\mathbb{T}=\mathbb{R}$, we have $f^{\Delta}=f^{'}$, the usual derivative, and for $\mathbb{T}=\mathbb{Z}$ we have the backward difference operator, $f^{\Delta}(t)=\Delta f(t):=f(t+1)-f(t)$.

Let $f$ be right-dense continuous, if $F^{\Delta}(t)=f(t)$, then we define the delta integral by
$
\int_{r}^{s}f(t)\Delta t=F(s)-F(r), r, s\in{\mathbb{T}}.
$
\begin{lemma}\cite{BP1}
Assume $f, g : \mathbb{T} \longrightarrow \mathbb{R}$ are delta differentiable at $t\in\mathbb{T}_{¦Ê}$, then
\begin{itemize}
    \item  [$(i)$]  $(f + g)^{\Delta}(t)=f^{\Delta}(t) + g^{\Delta}(t)$;
    \item  [$(ii)$] $(fg)^{\Delta}(t)=f^{\Delta}(t)g(t) + f^{\sigma}(t)g^{\Delta}(t) = f(t)g^{\Delta}(t) + f^{\Delta}(t)g^{\sigma}(t)$;
    \item  [$(iii)$]   If $f$ and $f^{\Delta}$ are continuous, then $(\int_{a}^{t}f(t, s)\Delta s)^{\Delta}=f(\sigma(t), t)+\int_{a}^{t}f^{\Delta}(t, s)\Delta s$.
\end{itemize}
\end{lemma}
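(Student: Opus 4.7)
The plan is to prove each of the three items directly from the $\varepsilon$-definition of the delta derivative, since this lemma is a foundational calculus-on-time-scales fact and nothing deeper than careful triangle-inequality bookkeeping is required.

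For part $(i)$, I would fix $\varepsilon>0$ and choose open neighborhoods $U_f$ and $U_g$ of $t$ witnessing the delta differentiability of $f$ and $g$ with tolerance $\varepsilon/2$. On $U := U_f\cap U_g$, the triangle inequality applied to $[(f+g)(\sigma(t))-(f+g)(s)]-(f^\Delta(t)+g^\Delta(t))[\sigma(t)-s]$ splits into the two defining bounds for $f$ and $g$, giving at most $\varepsilon|\sigma(t)-s|$. This identifies $(f+g)^\Delta(t)$ with $f^\Delta(t)+g^\Delta(t)$.

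For part $(ii)$, the key trick is the standard add-and-subtract decomposition
\begin{equation*}
f(\sigma(t))g(\sigma(t))-f(s)g(s)=\bigl[f(\sigma(t))-f(s)\bigr]g(\sigma(t))+f(s)\bigl[g(\sigma(t))-g(s)\bigr],
\end{equation*}
so that after subtracting $[f^\Delta(t)g(\sigma(t))+f(t)g^\Delta(t)][\sigma(t)-s]$ from both sides, the residual splits into a term controlled by $f$'s differentiability (scaled by the bounded quantity $g(\sigma(t))$) and a term controlled by $g$'s differentiability (scaled by $f(s)$, which is bounded near $t$ by continuity at $t$, itself a consequence of delta differentiability at $t$). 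Bounding each piece by $\varepsilon|\sigma(t)-s|/2$ after choosing $\varepsilon$ small enough relative to the local bounds on $f$ and $g$ gives the first product formula; the symmetric formula follows by interchanging the roles of $f$ and $g$ (or equivalently by using $f(\sigma(t))-f(s)=f^\Delta(t)[\sigma(t)-s]+o(\sigma(t)-s)$ in the alternate decomposition). The main obstacle here is keeping careful track of which argument is $\sigma(t)$ and which is $s$ or $t$, since on a time scale these are genuinely different points whenever $t$ is right-scattered; the two equivalent expressions for $(fg)^\Delta(t)$ arise precisely from the two admissible ways of grouping terms in the add-and-subtract step.

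For part $(iii)$, I would first use continuity of $f$ and $f^\Delta$ to justify that the integrand behaves nicely, then apply the definition of delta derivative to $F(t):=\int_a^t f(t,s)\,\Delta s$. Splitting $F(\sigma(t))-F(s)$ into $\int_s^{\sigma(t)} f(\sigma(t),\tau)\,\Delta\tau$ plus $\int_a^{s}[f(\sigma(t),\tau)-f(s,\tau)]\,\Delta\tau$, the first piece yields $f(\sigma(t),t)[\sigma(t)-s]$ up to $o(\sigma(t)-s)$ by continuity of $f$ in its first variable, while the second piece, after dividing by $\sigma(t)-s$, converges to $\int_a^t f^\Delta(t,\tau)\,\Delta\tau$ using the definition of the partial delta derivative of $f$ in its first variable together with uniform control on $[a,t]$ from continuity of $f^\Delta$. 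Assembling the two pieces gives the claimed formula. Since this is a textbook result quoted from \cite{BP1}, I would in practice simply refer the reader to that reference rather than reproduce the full $\varepsilon$-argument.
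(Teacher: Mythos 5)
The paper offers no proof of this lemma at all: it is quoted verbatim from \cite{BP1} (Bohner and Peterson), so there is no in-paper argument to compare against. Your sketch reproduces the standard textbook proofs, and the decompositions you chose are the right ones: the splitting $f(\sigma(t))g(\sigma(t))-f(s)g(s)=\bigl[f(\sigma(t))-f(s)\bigr]g(\sigma(t))+f(s)\bigl[g(\sigma(t))-g(s)\bigr]$ is exactly the one used in \cite{BP1} for the product rule, and the splitting of $F(\sigma(t))-F(s)$ in $(iii)$ into an increment over $[s,\sigma(t)]$ plus a difference of integrands over $[a,s]$ is likewise the standard one.

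Two points deserve more care than your sketch gives them. In $(ii)$, after subtracting $\bigl[f^{\Delta}(t)g(\sigma(t))+f(t)g^{\Delta}(t)\bigr](\sigma(t)-s)$, the second residual is $f(s)\bigl[g(\sigma(t))-g(s)\bigr]-f(t)g^{\Delta}(t)(\sigma(t)-s)$; besides the piece controlled by $g$'s differentiability (with the bounded factor $f(s)$) there is the extra term $\bigl[f(s)-f(t)\bigr]g^{\Delta}(t)(\sigma(t)-s)$, which must be killed using continuity of $f$ at $t$ (itself a consequence of delta differentiability). You invoke continuity only for boundedness of $f(s)$, not for this term. In $(iii)$, the passage from the pointwise estimate $f(\sigma(t),\tau)-f(s,\tau)\approx f^{\Delta}(t,\tau)(\sigma(t)-s)$ to the integrated statement requires the estimate to hold uniformly in $\tau\in[a,\sigma(t)]$; on a general time scale this does not follow automatically from continuity of $f^{\Delta}$, and the corresponding result in \cite{BP1} (Theorem 1.117 there) takes this uniform differentiability as an explicit hypothesis rather than deriving it. Since the lemma is used here as a black box from the literature, deferring to \cite{BP1} as you propose is the right call; if you do write the argument out, these are the two places to be precise.
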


A function $p:\mathbb{T}\rightarrow\mathbb{R}$ is called regressive provided $1+\mu(t)p(t)\neq 0$ for all $t\in{\mathbb{T}^{k}}$. The
set of all regressive and rd-continuous functions $p:\mathbb{T}\rightarrow\mathbb{R}$ will be denoted by $\mathcal{R}=\mathcal{R}(\mathbb{T})=\mathcal{R}(\mathbb{T}, \mathbb{R})$. We define the set $\mathcal{R}^{+}=\mathcal{R}^{+}(\mathbb{T}, \mathbb{R})=\{p\in\mathcal{R}: 1+\mu(t)p(t)> 0, \forall t\in{\mathbb{T}}\}$.

If $r\in\mathcal{R}$, then the generalized exponential function $e_{r}$ is defined by
\begin{eqnarray*}
e_{r}(t, s)=\exp\bigg\{\int_s^t\xi_{\mu(\tau)}(r(\tau))\Delta\tau\bigg\},
\end{eqnarray*}
for all $s,t\in\mathbb{T}$, with the cylinder transformation
\begin{eqnarray*}
\xi_h(z)=\bigg\{\begin{array}{ll} {\displaystyle\frac{\mathrm{Log}(1+hz)}{h}},\,\,h\neq 0,\\
z,\,\,\,\,\,\,\,\quad\quad\quad\quad h=0.\\
\end{array}
\end{eqnarray*}

Let $p,q:\mathbb{T}\rightarrow\mathbb{R}$ be two regressive functions, we define
\[
p\oplus q=p+q+\mu pq,\,\,\,\,\ominus p=-\frac{p}{1+\mu p},\,\,\,\,p\ominus q=p\oplus(\ominus q)=\frac{p-q}{1+\mu q}.
\]
Then the generalized exponential function has the following
properties.

\begin{lemma}\cite{BP1}
Assume that $p,q:\mathbb{T}\rightarrow\mathbb{R}$ are two regressive
functions, then
\begin{itemize}
    \item  [$(i)$]  $e_{0}(t,s)\equiv 1$ $\mathrm{and}$ $e_p(t,t)\equiv 1$;
    \item  [$(ii)$] $e_p(\sigma(t),s)=(1+\mu(t)p(t))e_p(t,s)$;
    \item  [$(iii)$]$e_p(t,s)=1/e_p(s,t)=e_{\ominus p}(s,t)$;
    \item  [$(iv)$]  $e_p(t,s)e_p(s,r)=e_p(t,r)$;
    \item  [$(v)$] $e_p(t,s)e_q(t,s)=e_{p\oplus q}(t,s)$;
    \item  [$(vi)$] $e_p(t,s)/e_q(t,s)=e_{p\ominus q}(t,s)$;
    \item  [$(vi)$] $\big(\frac{1}{e_p(t,s)}\big)^{\Delta}=\frac{-p(t)}{e^{\sigma}_p(t,s)}$.
\end{itemize}
\end{lemma}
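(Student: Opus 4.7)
The plan is to verify each of the seven listed properties directly from the definition
\[
e_p(t,s)=\exp\bigg\{\int_s^t\xi_{\mu(\tau)}(p(\tau))\Delta\tau\bigg\}
\]
using only elementary features of the delta integral and of the cylinder transformation. Two facts will be used repeatedly: (a) for any rd-continuous $f$, $\int_t^{\sigma(t)}f(\tau)\Delta\tau=\mu(t)f(t)$, which handles the right-scattered case; and (b) the identity $h\xi_h(z)=\mathrm{Log}(1+hz)$ for $h\neq 0$, with the continuous extension at $h=0$ matching the second branch $\xi_0(z)=z$.

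For $(i)$, $\xi_{\mu(\tau)}(0)=0$ so the integrand vanishes and $e_0(t,s)=\exp(0)=1$; likewise $\int_t^t=0$ gives $e_p(t,t)=1$. For $(ii)$, I would split $\int_s^{\sigma(t)}=\int_s^t+\int_t^{\sigma(t)}$; by (a) and (b) the second piece equals $\mu(t)\xi_{\mu(t)}(p(t))=\mathrm{Log}(1+\mu(t)p(t))$, so exponentiation produces exactly the factor $(1+\mu(t)p(t))$. Properties $(iii)$ and $(iv)$ are then immediate from reversal of the limits of integration and from additivity over $[s,r]=[s,t]\cup[t,r]$.

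For $(v)$ and $(vi)$, the crucial pointwise step is the algebraic identity $\xi_h(p)+\xi_h(q)=\xi_h(p\oplus q)$ with $p\oplus q=p+q+hpq$, which for $h\neq 0$ reduces to $\mathrm{Log}(1+hp)+\mathrm{Log}(1+hq)=\mathrm{Log}(1+h(p+q+hpq))$ and for $h=0$ is trivial; integrating and exponentiating yields $(v)$, and then $(vi)$ follows by combining $(v)$ with $(iii)$ applied to $\ominus q$. Finally, for the last property, I would apply the quotient rule $(1/f)^{\Delta}=-f^{\Delta}/(ff^{\sigma})$ to $f(t)=e_p(t,s)$, using the derivative formula $e_p^{\Delta}(t,s)=p(t)e_p(t,s)$; this derivative formula is obtained either by the chain rule for the time-scale exponential or directly from $(ii)$ together with the definition of $f^{\Delta}$ at scattered and dense points.

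The only real obstacle is careful bookkeeping: the two branches of $\xi_h$ between the right-dense case $h=0$ and the right-scattered case $h>0$ must be handled consistently, and at every step regressivity (i.e.\ $1+\mu p\neq 0$, and the corresponding nonvanishing for $q$, $p\oplus q$, and $p\ominus q$) must be invoked to ensure that every $\mathrm{Log}$ is well defined and every denominator in the $\oplus,\ominus$ calculus is nonzero. No deeper analytic input is needed, which is why the lemma is simply quoted as a ready tool from the Bohner--Peterson monograph.
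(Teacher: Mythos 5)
This lemma is imported verbatim from Bohner--Peterson \cite{BP1}; the paper supplies no proof of its own, so there is no in-paper argument to compare against. Your direct verification from the cylinder-transform definition is the standard one and is essentially sound: (i), (iii), (iv) from vanishing/reversal/additivity of the delta integral, (ii) from $\int_t^{\sigma(t)}f\,\Delta\tau=\mu(t)f(t)$, (v)--(vi) from the pointwise identity $\xi_h(p)+\xi_h(q)=\xi_h(p\oplus q)$, and the last item from the quotient rule together with $e_p^{\Delta}(\cdot,s)=p\,e_p(\cdot,s)$. Two small points deserve more care than your sketch gives them. First, for merely regressive (not positively regressive) $p$ the quantity $1+\mu p$ may be negative, so $\mathrm{Log}$ is the principal complex logarithm and the additivity $\mathrm{Log}\,w_1+\mathrm{Log}\,w_2=\mathrm{Log}(w_1w_2)$ used in (v) can fail by an integer multiple of $2\pi i$; this discrepancy disappears only after exponentiation, and your argument should be phrased at the level of $\exp$ of the integral rather than at the level of $\xi_h$ itself. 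Second, property (ii) is vacuous at right-dense points ($\sigma(t)=t$), so it cannot by itself yield $e_p^{\Delta}=p\,e_p$ there; you need the fundamental theorem of calculus for delta integrals, $\bigl(\int_s^t g(\tau)\Delta\tau\bigr)^{\Delta}=g(t)$ for rd-continuous $g$, combined with $\xi_0(p(t))=p(t)$ and the ordinary chain rule, which is legitimate precisely because $\mu(t)=0$ there. With those repairs the argument is complete; it is also worth noting that \cite{BP1} obtains several of these identities (notably the semigroup property and the derivative formula) via uniqueness for the initial value problem $y^{\Delta}=p(t)y$, $y(s)=1$, which avoids the logarithm-branch bookkeeping entirely.
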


\begin{lemma}\label{lem23}  Let $f:\mathbb{T}\rightarrow \mathbb{R}$ be a continuously function, $f(t)>0$ and $f^{\Delta}(t)\geq0$ for $t\in\mathbb{T}$, then
\begin{eqnarray*}
\frac{f^{\Delta}(t)}{f^{\sigma}(t)}\leq[\ln (f(t))]^{\Delta}\leq\frac{f^{\Delta}(t)}{f(t)}.
\end{eqnarray*}
If $f(t)>0$ and $f^{\Delta}(t)<0$ for $t\in\mathbb{T}$, then
\begin{eqnarray*}
\frac{f^{\Delta}(t)}{f(t)}\leq[\ln (f(t))]^{\Delta}\leq\frac{f^{\Delta}(t)}{f^{\sigma}(t)}.
\end{eqnarray*}
\end{lemma}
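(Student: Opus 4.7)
\medskip
\noindent\textbf{Proof plan.} My plan is to split into the right-dense and right-scattered cases and reduce the whole statement to a single elementary logarithmic inequality. At a right-dense $t$ we have $\sigma(t)=t$ and $\mu(t)=0$, so $f^\sigma(t)=f(t)$; the two outer expressions of each chain collapse to $f^\Delta(t)/f(t)$, and the delta derivative at such a point coincides with the usual limit, giving $(\ln f)^\Delta(t)=f^\Delta(t)/f(t)$ by the ordinary chain rule. The claimed inequalities therefore reduce to equalities, regardless of the sign of $f^\Delta(t)$.

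At a right-scattered $t$ the graininess $\mu(t)=\sigma(t)-t$ is strictly positive, and the definitions give
\[
[\ln f(t)]^\Delta \;=\; \frac{\ln f^\sigma(t)-\ln f(t)}{\mu(t)} \;=\; \frac{1}{\mu(t)}\ln\!\left(\frac{f^\sigma(t)}{f(t)}\right), \qquad f^\Delta(t)=\frac{f^\sigma(t)-f(t)}{\mu(t)}.
\]
Introducing the auxiliary quantity $r:=f^\sigma(t)/f(t)$, which is strictly positive since $f>0$, one has $\mu(t)f^\Delta(t)=f(t)(r-1)$, so the two candidate bounds become $f^\Delta(t)/f(t)=(r-1)/\mu(t)$ and $f^\Delta(t)/f^\sigma(t)=(r-1)/(r\mu(t))$. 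Multiplying the entire statement by $\mu(t)>0$ therefore reduces the lemma to the universal scalar inequality
\[
1-\tfrac{1}{r}\;\le\;\ln r\;\le\;r-1 \qquad (r>0),
\]
which I would establish in one line from the tangent-line characterisation of concavity of $\ln$ (or by a direct monotonicity check of the functions $r\mapsto \ln r-(r-1)$ and $r\mapsto \ln r-(1-1/r)$, both of which vanish at $r=1$).

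The final step is to read off the two cases of the lemma from the single inequality above, using the sign of $f^\Delta(t)$ only to identify which of $f(t)$ and $f^\sigma(t)$ is larger. If $f^\Delta(t)\ge 0$ then $r\ge 1$ and $f^\sigma(t)\ge f(t)$, so $1/f^\sigma(t)\le 1/f(t)$ and the chain $(r-1)/(r\mu(t))\le \ln(r)/\mu(t)\le (r-1)/\mu(t)$ is exactly the first inequality of the lemma. If $f^\Delta(t)<0$ then $r<1$ and $f^\sigma(t)<f(t)$, and the same scalar inequality, reinterpreted with $r-1<0$, yields the second chain. I expect the only real obstacle to be the sign bookkeeping in the $f^\Delta(t)<0$ regime: multiplying an ordered chain by a negative quantity (or dividing by a factor $r<1$) flips the relative position of the two outer bounds, so one must track carefully which of $f^\Delta/f$ and $f^\Delta/f^\sigma$ serves as the upper and which as the lower bound in each case. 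Once this is done cleanly, both displays in the lemma follow from the single logarithmic inequality above.
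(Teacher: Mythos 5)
Your reduction is essentially the paper's own argument: both compute $[\ln f(t)]^{\Delta}=\frac{1}{\mu(t)}\ln\big(f^{\sigma}(t)/f(t)\big)$ at right-scattered points and then invoke elementary logarithmic bounds; your version is slightly cleaner in that you use the single inequality $1-\frac1r\le\ln r\le r-1$ for all $r>0$ at once, where the paper runs two separate monotonicity arguments restricted to $r\ge1$. For the first display (the case $f^{\Delta}(t)\ge0$) your argument is complete and correct.

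The final step for $f^{\Delta}(t)<0$ does not go through, and the obstacle you flagged as ``sign bookkeeping'' is fatal rather than cosmetic. With $r=f^{\sigma}(t)/f(t)$ one has, identically and independently of the sign of $f^{\Delta}(t)$, $f^{\Delta}(t)/f(t)=(r-1)/\mu(t)$ and $f^{\Delta}(t)/f^{\sigma}(t)=(1-\frac1r)/\mu(t)$; the only rescaling in your reduction is division by $\mu(t)>0$, so nothing is ever multiplied by a negative quantity and no inequality reverses. Consequently the scalar inequality delivers $f^{\Delta}(t)/f^{\sigma}(t)\le[\ln f(t)]^{\Delta}\le f^{\Delta}(t)/f(t)$ for \emph{every} $r>0$ --- the first chain again, not the second. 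In fact the second display of the lemma is false at right-scattered points: take $\mathbb{T}=\mathbb{Z}$, $f(0)=1$, $f(1)=\tfrac12$, so that $f^{\Delta}(0)=-\tfrac12$, $[\ln f]^{\Delta}(0)=-\ln 2\approx-0.693$, $f^{\Delta}(0)/f(0)=-\tfrac12$ and $f^{\Delta}(0)/f^{\sigma}(0)=-1$; the claimed chain $-\tfrac12\le-0.693\le-1$ fails on both sides, while the first chain holds. So the step you deferred cannot be completed; what your computation actually proves is that the first chain holds regardless of the sign of $f^{\Delta}$. (The paper's own proof shares this defect: it establishes only the first display and dismisses the second with ``similarly''.)
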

\begin{proof}
If $f^{\Delta}(t)\geq0$ for $t\in\mathbb{T}$, by use of Chain Rule, we can obtain
\begin{eqnarray*}
 \addtolength{\arraycolsep}{-3pt}
[\ln(f(t))]^{\Delta}=
 \left\{%
\begin{array}{lcrcl}
\frac{f^{\Delta}(t)}{f(t)},\,\,\,\,\quad\quad\quad\quad\quad\quad\quad\quad\quad \mu(t)=0,\\
\bigg(\int_{0}^{1}\frac{dh}{f(t)+h\mu(t)f^{\Delta}(t)}\bigg)f^{\Delta}(t),\,\,\,\,\mu(t)\neq0.
\end{array}\right.
\end{eqnarray*}
If $\mu(t)\neq0$, then
\begin{eqnarray}\label{e0.1}
 \addtolength{\arraycolsep}{-3pt}
[\ln(f(t))]^{\Delta}&=&\bigg(\int_{0}^{1}\frac{dh}{f(t)+h\mu(t)f^{\Delta}(t)}\bigg)f^{\Delta}(t)\nonumber\\
&=&\frac{1}{\mu(t)f^{\Delta}(t)}\int_{f(t)}^{f(t)+\mu(t)f^{\Delta}(t)}\frac{ds}{s}f^{\Delta}(t)\nonumber\\
&=&\frac{1}{\mu(t)}\ln\bigg(\frac{f(t)+\mu(t)f^{\Delta}(t)}{f(t)}\bigg)\nonumber\\
&=&\frac{1}{\mu(t)}\ln\bigg(\frac{f^\sigma(t)}{f(t)}\bigg).
\end{eqnarray}

Let
$g_1(r)=r-1-\ln r,$ then $g_1'(r)=\frac{r-1}{r}\geq0$ for $r\geq1$. Hence,
  $g_1$ is an increasing function. By use of $g_1(1)=0$, we have $g_1(r)\geq0$ for $r\geq1$. That is, $r-1\geq\ln r$ for $r\geq1$.
Since
\begin{eqnarray*}
 \addtolength{\arraycolsep}{-3pt}
\frac{f^{\Delta}(t)}{f(t)}&=&\frac{1}{\mu(t)}\bigg(\frac{f(t)+\mu(t)f^{\Delta}(t)}{f(t)}-1\bigg)\\
&\geq&\frac{1}{\mu(t)}\ln\bigg(\frac{f(t)+\mu(t)f^{\Delta}(t)}{f(t)}\bigg)\\
&=&[\ln(f(t))]^{\Delta},
\end{eqnarray*}
then
\begin{eqnarray*}
[\ln (f(t))]^{\Delta}\leq\frac{f^{\Delta}(t)}{f(t)}.
\end{eqnarray*}

Similarly, by use of Chain Rule, if $\mu(t)=0$, then $f(t)=f^\sigma(t)$, we have
\begin{eqnarray*}
[\ln (f(t))]^{\Delta}=\frac{f^{\Delta}(t)}{f(t)}=\frac{f^{\Delta}(t)}{f^\sigma(t)}.
\end{eqnarray*}

If $\mu(t)\neq0$, let $g_2(r)=\ln r+\frac{1}{r}-1,$ then $g_2'(r)=\frac{r-1}{r^2}\geq0$ for $r\geq1$. Hence,
  $g_2$ is an increasing function. By use of $g_2(1)=0$, we have $g_2(r)\geq0$ for $r\geq1$. That is, $\ln r\geq1-\frac{1}{r}$ for $r\geq1$.
By use of $\eqref{e0.1}$, we have
\begin{eqnarray*}
 \addtolength{\arraycolsep}{-3pt}
\frac{f^{\Delta}(t)}{f^\sigma(t)}&=&\frac{1}{\mu(t)}\bigg(\frac{\mu(t)f^{\Delta}(t)-f^\sigma(t)}{f^\sigma(t)}+1\bigg)\\
&=&\frac{1}{\mu(t)}\bigg(1-\frac{f(t)}{f^\sigma(t)}\bigg)\\
&\leq& \frac{1}{\mu(t)}\ln\bigg(\frac{f^\sigma(t)}{f(t)}\bigg)\\
&=&[\ln(f(t))]^{\Delta},
\end{eqnarray*}
so we can obtain
\begin{eqnarray*}
\frac{f^{\Delta}(t)}{f^{\sigma}(t)}\leq[\ln (f(t))]^{\Delta}.
\end{eqnarray*}

Similarly, if $f^{\Delta}(t)\leq0$ for $t\in\mathbb{T}$, we can prove that
\begin{eqnarray*}
\frac{f^{\Delta}(t)}{f(t)}\leq[\ln (f(t))]^{\Delta}\leq\frac{f^{\Delta}(t)}{f^{\sigma}(t)}.
\end{eqnarray*}
The proof is completed.
\end{proof}

\begin{definition}\cite{LW} A time scale $\mathbb{T}$ is called an almost periodic time scale if
\begin{eqnarray*}
\Pi=\big\{\tau\in\mathbb{R}: t\pm\tau\in\mathbb{T}, \forall t\in{\mathbb{T}}\big\}\neq\{0\}.
\end{eqnarray*}
\end{definition}

\begin{definition} \cite{LW} Let $\mathbb{T}$ be an almost periodic time scale. A function $f\in C(\mathbb{T}, \mathbb{R}^{n})$ is called an
almost periodic function if the $\epsilon$-translation set of $f$
\begin{eqnarray*}
E\{\epsilon, f\}=\{t\in\Pi: |f(t+\tau)-f(t)|<\epsilon, \forall t\in{\mathbb{T}}\}
\end{eqnarray*}
is a relatively dense set in $\mathbb{T}$ for all $\epsilon>0$; that is, for any given $\epsilon>0$, there exists a constant
$l(\epsilon)>0$ such that each interval of length $l(\epsilon)$ contains a $\tau(\epsilon)\in E\{\epsilon, f\}$such that
\begin{eqnarray*}
 |f(t+\tau)-f(t)|<\epsilon, \forall t\in{\mathbb{T}}.
\end{eqnarray*}
$\tau$ is called the $\epsilon$-translation number of $f$.
\end{definition}

\begin{lemma} \cite{LW}
If $f, g\in C(\mathbb{T}, \mathbb{R}^{n})$ are almost periodic, then $fg$, $f+g$ are almost periodic.
\end{lemma}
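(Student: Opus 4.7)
The plan is to proceed in three steps: first show that any almost periodic function is bounded, then establish that finitely many almost periodic functions admit a relatively dense set of common $\epsilon$-translation numbers, and finally use these two facts to verify the definition directly for $f+g$ and $fg$.

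For boundedness, I would fix $\epsilon = 1$ and let $l = l(1)$ be the length produced by the definition. Choose any $t_0 \in \mathbb{T}$ and set $M_f = 1 + \sup\{|f(s)| : s \in [t_0, t_0+l]\cap\mathbb{T}\}$, which is finite by continuity of $f$ on this compact set. For arbitrary $t \in \mathbb{T}$, relative density of $E\{1,f\}$ supplies $\tau \in E\{1,f\}$ with $t+\tau \in [t_0, t_0+l]$, whence $|f(t)| \leq |f(t+\tau)| + 1 \leq M_f$. The same reasoning bounds $g$.

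The technical core is the common-translation lemma: for any $\epsilon > 0$, the intersection $E\{\epsilon,f\} \cap E\{\epsilon,g\}$ is relatively dense in $\Pi$. I would derive this from a Bochner-type characterization of almost periodicity, namely that $f$ is almost periodic iff the family of translates $\{f(\cdot+\tau) : \tau \in \Pi\}$ is precompact in $C_b(\mathbb{T},\mathbb{R}^n)$ under the uniform norm. Applying this to the $\mathbb{R}^{2n}$-valued function $(f,g)$, whose translates form a precompact family in the product space, a standard finite-cover plus pigeonhole argument produces arbitrarily long intervals in $\Pi$ containing common $\epsilon$-translation numbers of $f$ and $g$.

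Granted these tools, the conclusions follow by direct estimation. For $f+g$, any $\tau \in E\{\epsilon/2,f\} \cap E\{\epsilon/2,g\}$ gives $|(f+g)(t+\tau) - (f+g)(t)| < \epsilon$ via the triangle inequality, and this set is relatively dense by the common-translation lemma. For $fg$, set $M = \max\{\sup_{\mathbb{T}}|f|, \sup_{\mathbb{T}}|g|\}$, finite by the boundedness step; then for any $\tau \in E\{\epsilon/(2M),f\} \cap E\{\epsilon/(2M),g\}$,
\[
|f(t+\tau)g(t+\tau) - f(t)g(t)| \leq |f(t+\tau)|\,|g(t+\tau)-g(t)| + |g(t)|\,|f(t+\tau)-f(t)| < \epsilon,
\]
and again the set is relatively dense. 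The principal obstacle is the common-translation lemma in the time-scale setting, since one must be sure that the Bochner compactness characterization transfers faithfully from $\mathbb{R}$ to a general almost periodic time scale; once that is in place, the remaining verifications are a routine application of the definition.
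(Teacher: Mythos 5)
The paper offers no proof of this lemma: it is imported verbatim from the cited reference [LW] (Li and Wang), so there is no in-paper argument to compare yours against. Judged on its own merits, your outline is the standard one and is essentially correct. The boundedness step is fine (relative density of $E\{1,f\}$ lets you translate any $t$ back into the window $[t_0,t_0+l]\cap\mathbb{T}$, which is compact because $\mathbb{T}$ is closed), and once you have a relatively dense set of common $\epsilon$-translation numbers, the triangle-inequality estimates for $f+g$ and $fg$ are routine exactly as you write them. You have also correctly located the entire mathematical content in the common-translation lemma; everything else is bookkeeping. Two remarks on that crux. First, the Bochner route you sketch does transfer to almost periodic time scales: $\Pi$ is closed under addition and negation, so translates $f(\cdot+\tau)$ remain in $C(\mathbb{T},\mathbb{R}^n)$, and the equivalence of the Bohr definition with precompactness of the translate family (together with the existence of common translation numbers for finitely many almost periodic functions) is proved in [LW] itself; alternatively, the classical Bohr-style direct proof of the common-translation lemma goes through with only notational changes. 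Second, be aware that deferring to Bochner's criterion is not a shortcut in terms of difficulty — the implication from the Bohr definition to precompactness of translates is itself a theorem of comparable depth to the statement being proved — so a fully self-contained write-up would still owe that argument. As stated, your proposal is a correct reduction to a known result rather than a complete proof, which is a reasonable standard given that the paper itself treats the lemma as a citation.
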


\begin{lemma} \cite{LW}
If $f\in C(\mathbb{T}, \mathbb{R}^{n})$ is almost periodic, $F(\cdot)$ is uniformly continuous on the value field of $f(t)$, then
$F\circ f$ is almost periodic.
\end{lemma}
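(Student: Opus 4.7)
The plan is to verify the definition of almost periodicity for $F\circ f$ directly, using uniform continuity of $F$ as the bridge between $\epsilon$-translation numbers of $f$ and $\epsilon$-translation numbers of $F\circ f$. First I would observe that $F\circ f$ is continuous on $\mathbb{T}$: since $f\in C(\mathbb{T},\mathbb{R}^{n})$ and $F$ is uniformly continuous (in particular continuous) on the value field $V=\overline{\{f(t):t\in\mathbb{T}\}}$, the composition is continuous at every point of $\mathbb{T}$. So the only substantive task is to show that for every $\epsilon>0$ the translation set $E\{\epsilon,F\circ f\}$ is relatively dense in $\Pi$.

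To that end, fix $\epsilon>0$. By uniform continuity of $F$ on $V$, choose $\delta=\delta(\epsilon)>0$ such that $|F(x)-F(y)|<\epsilon$ whenever $x,y\in V$ and $|x-y|<\delta$. Since $f$ is almost periodic, the set $E\{\delta,f\}$ is relatively dense in $\Pi$: there is a constant $l=l(\delta)>0$ such that every interval of length $l$ in $\mathbb{T}$ contains some $\tau\in\Pi$ with
\[
|f(t+\tau)-f(t)|<\delta\quad\text{for all } t\in\mathbb{T}.
\]
For such a $\tau$, both $f(t+\tau)$ and $f(t)$ lie in $V$, hence
\[
|F(f(t+\tau))-F(f(t))|<\epsilon\quad\text{for all } t\in\mathbb{T},
\]
which says $\tau\in E\{\epsilon,F\circ f\}$. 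Therefore $E\{\delta,f\}\subseteq E\{\epsilon,F\circ f\}$, so $E\{\epsilon,F\circ f\}$ inherits relative density from $E\{\delta,f\}$. Combined with continuity, this gives almost periodicity of $F\circ f$ in the sense of the definition stated above.

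The only point that requires a little care is the choice of the value field: the definition of uniform continuity is being applied on the range (or its closure) of $f$, which is precisely where the values $f(t+\tau)$ and $f(t)$ live, so the implication $|f(t+\tau)-f(t)|<\delta\Rightarrow|F(f(t+\tau))-F(f(t))|<\epsilon$ is legitimate without any boundedness hypothesis on $F$ globally. I do not anticipate any genuine obstacle; the argument is the standard translation-number argument for almost periodic functions, transported verbatim to the time scale setting because the definition of $E\{\epsilon,\cdot\}$ and relative density are formally identical to the classical ones.
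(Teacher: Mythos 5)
Your argument is correct and is exactly the canonical translation-number proof: uniform continuity of $F$ converts $\delta$-translation numbers of $f$ into $\epsilon$-translation numbers of $F\circ f$, and relative density is inherited from the inclusion $E\{\delta,f\}\subseteq E\{\epsilon,F\circ f\}$. The paper itself cites this lemma from \cite{LW} without reproducing a proof, so there is nothing to contrast with, but your proof is the standard one used in that reference and it is complete.
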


\begin{lemma} \cite{LW}
If $f\in C(\mathbb{T}, \mathbb{R}^{n})$ is almost periodic, then $F(t)$ is almost periodic if and only if $F(t)$ is bounded on $\mathbb{T}$, where $F(t)=\int_{0}^{t}f(s)\Delta s$.	
\end{lemma}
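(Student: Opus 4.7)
This is the Bohl--Bohr theorem on an almost periodic time scale: for almost periodic $f$, the primitive $F(t)=\int_0^t f(s)\Delta s$ is almost periodic if and only if it is bounded on $\mathbb{T}$.

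The forward direction is immediate from the fact that any almost periodic function on $\mathbb{T}$ is bounded. Given $t\in\mathbb{T}$, pick a $1$-translation number $\tau\in E\{1,F\}\cap\Pi$ within distance $l(1)$ of $t$; then $|F(t)|\leq 1+\sup_{s\in[0,l(1)]\cap\mathbb{T}}|F(s)|$, which is finite by continuity of $F$ on a compact interval. One line suffices.

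For the converse, assume $\|F\|_\infty\leq K$. My plan is to produce, for each $\epsilon>0$, a relatively dense set of $\epsilon$-translation numbers of $F$. Pick $\eta>0$ (to be calibrated later against $\epsilon$) and let $E=E\{\eta,f\}$, which is relatively dense in $\Pi$ by almost periodicity of $f$. For $\tau\in E$, the identity
$$F(t+\tau)-F(t)-F(\tau)=\int_0^t\bigl[f(s+\tau)-f(s)\bigr]\Delta s$$
gives $|F(t+\tau)-F(t)-F(\tau)|\leq\eta|t|$, which is useful only on a bounded $t$-range. To upgrade to a uniform-in-$t$ estimate I exploit the boundedness of $F$ by a Bochner-type compactness argument: from any sequence $\{\tau_k\}\subset E$ with $\tau_k\to\infty$, the values $\{F(\tau_k)\}\subset\mathbb{R}^n$ lie in the box $[-K,K]^n$, so a Cauchy subsequence can be extracted. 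The differences $\tau_{k_i}-\tau_{k_j}\in\Pi$ then act as approximate translation numbers of $F$, and combining the relative density of $E$ with a pigeonhole argument on the cells of a fine grid covering $[-K,K]^n$ shows that such differences form a relatively dense subset of $\Pi$, which I would verify to lie in $E\{\epsilon,F\}$.

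The main obstacle is precisely the upgrade from the local linear-in-$|t|$ control to a uniform control on all of $\mathbb{T}$: the naive bound $\eta|t|$ on the integral blows up, so smallness of the integrand alone is insufficient. The key is the classical Bohl observation that the function $t\mapsto F(t+\tau)-F(t)$ is itself globally bounded by $2K$ on $\mathbb{T}$ while its $\Delta$-derivative $f(t+\tau)-f(t)$ is uniformly small. Combining this global bound with a partition of $\mathbb{T}$ into overlapping windows of length comparable to $\epsilon/\eta$ (on each of which the local linear estimate is valid) and using the Cauchy structure of the sequence $\{F(\tau_k)\}$ to synchronize the constants across windows, one forces $\|F(\cdot+\tau_{k_i})-F(\cdot+\tau_{k_j})\|_\infty\to 0$ as $i,j\to\infty$. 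This delivers the required relatively dense set $E\{\epsilon,F\}$ for every $\epsilon>0$ and hence the almost periodicity of $F$, completing the nontrivial direction.
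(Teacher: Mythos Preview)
The paper does not prove this lemma at all: it is quoted from reference~[LW] (Li and Wang) and stated without proof, exactly like the surrounding Lemmas~2.4--2.8. There is therefore no in-paper argument to compare your proposal against.

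Regarding your sketch on its own merits: the forward direction is fine, and for the converse you have correctly isolated the real difficulty---the naive bound $|F(t+\tau)-F(t)-F(\tau)|\le\eta|t|$ is useless globally, and one must instead exploit that $G_\tau(t):=F(t+\tau)-F(t)$ is globally bounded by $2K$ while $G_\tau^\Delta$ is uniformly small. However, your closing paragraph (``partition of $\mathbb{T}$ into overlapping windows \ldots\ synchronize the constants across windows'') is not yet an argument: you have not said what quantity is being controlled on each window, nor how the Cauchy property of $\{F(\tau_k)\}$ propagates across windows to yield $\sup_t|F(t+\tau_{k_i})-F(t+\tau_{k_j})|<\epsilon$. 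As written, the window estimate $\eta\cdot(\text{window length})$ and the global bound $2K$ do not combine to produce anything smaller than $2K$. A cleaner finish is either (i) Bochner's criterion applied directly to $F$: pass to a subsequence along which $f(\cdot+\tau_k)$ converges uniformly and $F(\tau_k)$ converges in $\mathbb{R}^n$, then show $F(\cdot+\tau_k)$ is uniformly Cauchy by writing $F(t+\tau_k)-F(t+\tau_j)=\bigl(F(\tau_k)-F(\tau_j)\bigr)+\int_0^t[f(s+\tau_k)-f(s+\tau_j)]\,\Delta s$ and handling the integral via a two-sided limsup argument using boundedness of $F$; or (ii) the classical Bohl device of showing that a bounded function with uniformly small $\Delta$-derivative has small oscillation, applied to $G_{\tau_{k_i}}-G_{\tau_{k_j}}$. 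Either route requires one more genuine idea than your outline currently supplies.
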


\begin{definition} \cite{LYZ}
$\varphi\in C(\mathbb{T},\mathbb{R})$ is said to be asymptotically almost periodic, if
\begin{eqnarray}\label{e21}
\varphi(t)=p(t)+q(t),
\end{eqnarray}
where $p(t)$ is an almost periodic function on $\mathbb{T}$ and $q(t)$ is continuous on $\mathbb{T}$,
$\lim_{t\rightarrow\infty}q(t)=0$.
\end{definition}

\begin{lemma} \cite{LYZ}
Let $\varphi\in C(\mathbb{T},\mathbb{R})$ is asymptotically almost periodic. Then the decomposition $\eqref{e21}$ is unique.
\end{lemma}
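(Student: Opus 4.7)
The plan is to reduce the uniqueness statement to the standard fact that the only almost periodic function tending to zero at infinity is identically zero, and then to verify that this standard fact carries over to the time scale setting.

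First, I would suppose that $\varphi$ admits two decompositions
\[
\varphi(t)=p_1(t)+q_1(t)=p_2(t)+q_2(t),
\]
where $p_1,p_2$ are almost periodic on $\mathbb{T}$ and $q_1,q_2\in C(\mathbb{T},\mathbb{R})$ satisfy $\lim_{t\to\infty}q_i(t)=0$. Setting $P(t):=p_1(t)-p_2(t)$ and $Q(t):=q_2(t)-q_1(t)$ gives the identity $P(t)=Q(t)$ on $\mathbb{T}$. By the sum-stability lemma for almost periodic functions quoted just above, $P$ is almost periodic, and clearly $\lim_{t\to\infty}Q(t)=0$. Hence it suffices to show that any almost periodic $P:\mathbb{T}\to\mathbb{R}$ with $P(t)\to 0$ as $t\to\infty$ must vanish identically; this will immediately yield $p_1\equiv p_2$ and therefore $q_1\equiv q_2$.

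For the core claim, I would argue by contradiction. Suppose $|P(t_0)|=2\epsilon>0$ for some $t_0\in\mathbb{T}$. Apply the definition of almost periodicity on $\mathbb{T}$ with tolerance $\epsilon$: there exists $l=l(\epsilon)>0$ such that every interval of length $l$ in $\mathbb{T}$ meets the translation set $E\{\epsilon,P\}\subset\Pi$. Because $Q(t)=P(t)\to 0$, there exists $T>0$ such that $|P(t)|<\epsilon$ whenever $t>T$ and $t\in\mathbb{T}$. Choose a translation number $\tau\in E\{\epsilon,P\}$ inside an interval $[T-t_0,\,T-t_0+l]\cap\Pi$, so that $t_0+\tau>T$ and $t_0+\tau\in\mathbb{T}$ (the latter by the definition of $\Pi$). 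Then on the one hand $|P(t_0+\tau)|<\epsilon$, while on the other hand
\[
|P(t_0+\tau)|\ge |P(t_0)|-|P(t_0+\tau)-P(t_0)|>2\epsilon-\epsilon=\epsilon,
\]
which is a contradiction. Hence $P\equiv 0$.

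The only delicate point—and the main obstacle—is the last step: one has to know that translation numbers $\tau$ can be chosen so that both $t_0+\tau$ lies in $\mathbb{T}$ and $t_0+\tau$ is as large as we please. The first follows from $\tau\in\Pi$ together with $\mathbb{T}$ being an almost periodic time scale; the second is precisely the relative-density part of the definition applied on the half line $[T-t_0,\infty)_{\mathbb{T}}$. Once these are in place the argument mirrors the classical Euclidean proof, and no further calculation is needed.
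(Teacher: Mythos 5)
Your argument is correct: reducing uniqueness to the fact that an almost periodic function on $\mathbb{T}$ tending to zero at infinity must vanish, and proving that fact via a far-out translation number $\tau\in\Pi$ (so that $t_0+\tau\in\mathbb{T}$), is exactly the standard proof, and the delicate points you flag are handled properly by the definitions of $\Pi$ and of the relatively dense translation set. The paper itself gives no proof here — it cites the result from \cite{LYZ} — so there is nothing to compare against beyond noting that your route is the expected one.
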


\begin{lemma} \cite{LYZ}
Let $\varphi\in C(\mathbb{T},\mathbb{R})$, the following propositions are equivalent:
\begin{itemize}
    \item  [$(i)$] $\varphi(t)$ is asymptotically almost periodic;
    \item  [$(ii)$] for any $\varepsilon>0$, there exist constants $l(\varepsilon)>0$ and $k(\varepsilon)>0$ such that each interval of length $l(\varepsilon)$ contains at least one $\tau$ such that
        \begin{eqnarray*}
       |\varphi(t+\tau)-\varphi(t)|<\varepsilon,\,\,\forall t,t+\tau\geq k(\varepsilon).
        \end{eqnarray*}
\end{itemize}
\end{lemma}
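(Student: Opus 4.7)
The plan is to prove the two implications separately; the forward direction is a standard $\varepsilon/3$ computation, while the reverse direction requires constructing the almost periodic component through a diagonal/Cauchy argument adapted to the almost periodic time scale $\mathbb{T}$.

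\textbf{Direction (i) $\Rightarrow$ (ii).} First I would write $\varphi(t)=p(t)+q(t)$ with $p$ almost periodic on $\mathbb{T}$ and $q\in C(\mathbb{T},\mathbb{R})$ satisfying $\lim_{t\to\infty}q(t)=0$. Given $\varepsilon>0$, apply the almost periodicity of $p$ with tolerance $\varepsilon/3$ to obtain $l(\varepsilon/3)>0$ such that every interval of length $l(\varepsilon/3)$ contains some $\tau\in E\{\varepsilon/3,p\}\subset\Pi$. Choose $k(\varepsilon)>0$ so large that $|q(t)|<\varepsilon/3$ whenever $t\geq k(\varepsilon)$. Then for any $t,t+\tau\geq k(\varepsilon)$,
\[
|\varphi(t+\tau)-\varphi(t)|\leq |p(t+\tau)-p(t)|+|q(t+\tau)|+|q(t)|<\varepsilon.
\]
Setting $l(\varepsilon):=l(\varepsilon/3)$ yields (ii).

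\textbf{Direction (ii) $\Rightarrow$ (i).} This is the substantive direction. For each $n\in\mathbb{N}$, (ii) with $\varepsilon=1/n$ gives constants $l_n,k_n$, and by choosing the interval of length $l_n$ far enough to the right I can pick a translation number $\tau_n\in\Pi$ with $\tau_n\to+\infty$ such that $|\varphi(t+\tau_n)-\varphi(t)|<1/n$ for all $t,t+\tau_n\geq k_n$. The goal is to show that, passing to a subsequence, $\varphi(\cdot+\tau_{n_j})$ converges uniformly on $\mathbb{T}$ to a function $p$, which will be almost periodic, and that $q:=\varphi-p$ tends to $0$ at infinity.

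To extract the convergent subsequence, I would fix an exhausting sequence $\mathbb{T}\cap[-N,N]$ and show, using (ii) together with uniform continuity of $\varphi$ on compact subsets, that $\{\varphi(\cdot+\tau_n)\}$ is equicontinuous and uniformly bounded on each such compact piece; then Arzelà–Ascoli combined with a diagonal extraction produces a subsequence $\tau_{n_j}$ so that $\varphi(t+\tau_{n_j})\to p(t)$ uniformly on every bounded subset of $\mathbb{T}$. The key estimate is that for $j<k$,
\[
|\varphi(t+\tau_{n_j})-\varphi(t+\tau_{n_k})|
\]
can be bounded by $1/n_j+1/n_k$ once $t$ is sufficiently large, using the translation property applied twice, which upgrades the pointwise limit to a uniform limit on $\mathbb{T}$.

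To verify that $p$ is almost periodic on $\mathbb{T}$, I would check the $\varepsilon$-translation condition: given $\varepsilon>0$, pick $n$ with $3/n<\varepsilon$; then for any translation number $\tau$ from (ii) at level $1/n$, the triangle inequality $|p(t+\tau)-p(t)|\leq |p(t+\tau)-\varphi(t+\tau+\tau_{n_j})|+|\varphi(t+\tau+\tau_{n_j})-\varphi(t+\tau_{n_j})|+|\varphi(t+\tau_{n_j})-p(t)|$ gives $|p(t+\tau)-p(t)|<\varepsilon$ for all $t\in\mathbb{T}$ after letting $j\to\infty$. Finally, setting $q:=\varphi-p$, uniform convergence $\varphi(t+\tau_{n_j})\to p(t)$ together with the translation estimate forces $q(t)\to 0$ as $t\to\infty$: indeed, $|q(t)|=|\varphi(t)-p(t)|\leq |\varphi(t)-\varphi(t+\tau_{n_j})|+|\varphi(t+\tau_{n_j})-p(t)|$, which can be made arbitrarily small by first choosing $j$ large and then $t\geq k_{n_j}$.

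The main obstacle is the diagonal/compactness step: one must show that the family $\{\varphi(\cdot+\tau_n)\}$ has a subsequence converging uniformly on all of $\mathbb{T}$ (not merely on compact subsets), and this upgrade depends essentially on exploiting the uniformity in $t$ built into condition (ii). Uniqueness of the decomposition, which justifies calling the constructed $p$ ``the'' almost periodic part, follows from the previously stated uniqueness lemma.
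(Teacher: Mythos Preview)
The paper does not give a proof of this lemma at all: it is stated with a citation to \cite{LYZ} and no argument is supplied. So there is no ``paper's own proof'' to compare against here.

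Assessing your sketch on its own merits: the overall architecture is the classical Fr\'echet-type characterization of asymptotic almost periodicity, and both directions are set up correctly. The $(i)\Rightarrow(ii)$ argument is complete as written. For $(ii)\Rightarrow(i)$, the strategy of extracting $\tau_n\in\Pi$ with $\tau_n\to\infty$, passing to a subsequence via Arzel\`a--Ascoli, and then upgrading to uniform convergence using the translation estimate is the right one, and your identification of the compact-to-global upgrade as the crux is accurate. Two points deserve a bit more care if you flesh this out. First, Arzel\`a--Ascoli requires uniform boundedness of $\{\varphi(\cdot+\tau_n)\}$, which you assert but do not derive; it follows from $(ii)$ by noting that for $t\geq k(1)+l(1)$ one can find $\tau$ in $[t-k(1)-l(1),\,t-k(1)]$ so that $t-\tau\in[k(1),k(1)+l(1)]$, whence $|\varphi(t)|\leq 1+\sup_{[k(1),k(1)+l(1)]_{\mathbb{T}}}|\varphi|$. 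Second, on a general almost periodic time scale the statement $(ii)$ only makes sense for $\tau\in\Pi$, so the relative density must be understood inside $\Pi$; you use this implicitly when writing $\tau_n\in\Pi$, and it should be made explicit. With those two clarifications your argument goes through.
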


\begin{lemma} \cite{BP1}
Assume that $a\in\mathcal{R}$ and $t_{0}\in{\mathbb{T}}$, if $a\in\mathcal{R}^{+}$ on $\mathbb{T}^{k}$, then $e_a(t,t_{0})> 0$ for all $t\in{\mathbb{T}}$.
\end{lemma}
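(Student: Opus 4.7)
The plan is to work directly from the definition of the generalized exponential function given just above the statement, namely
\[
e_a(t,t_0)=\exp\!\left\{\int_{t_0}^{t}\xi_{\mu(\tau)}(a(\tau))\,\Delta\tau\right\},
\]
and to show that the integrand is real-valued on all of $\mathbb{T}^{k}$; once that is established, $e_a(t,t_0)$ is the exponential of a real number and hence automatically positive.

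To verify the reality of $\xi_{\mu(\tau)}(a(\tau))$, I would split on the graininess at $\tau$. At right-dense points we have $\mu(\tau)=0$, so by definition $\xi_0(a(\tau))=a(\tau)\in\mathbb{R}$. At right-scattered points $\mu(\tau)>0$, and the hypothesis $a\in\mathcal{R}^{+}$ yields $1+\mu(\tau)a(\tau)>0$; therefore $\mathrm{Log}(1+\mu(\tau)a(\tau))$ reduces to the ordinary real logarithm of a positive real, so that $\xi_{\mu(\tau)}(a(\tau))=\mathrm{Log}(1+\mu(\tau)a(\tau))/\mu(\tau)$ is real. Combining these two cases and using the rd-continuity of $a$ (inherited from $a\in\mathcal{R}\subset C_{\mathrm{rd}}$), the integrand is an rd-continuous real-valued function, its $\Delta$-integral exists and is real for every $t\in\mathbb{T}$, and the exponential is therefore strictly positive.

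As a sanity check, one can run the argument inductively using the identity $e_a(\sigma(t),t_0)=(1+\mu(t)a(t))\,e_a(t,t_0)$ from Lemma 2.2(ii): since $e_a(t_0,t_0)=1>0$ and $1+\mu(t)a(t)>0$ under the hypothesis, positivity is preserved across every right-scattered jump, while on right-dense portions $e_a(\cdot,t_0)$ satisfies $y^{\Delta}=a(t)y$ so it cannot cross zero without violating local uniqueness. There is no genuine obstacle in the proof; the only subtle point worth stating explicitly is that the regressivity-plus condition $a\in\mathcal{R}^{+}$ is precisely what forces the complex logarithm appearing in the cylinder transformation to collapse to the real logarithm, which is what makes the whole argument rigorous and short.
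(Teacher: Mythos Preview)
Your argument is correct: the positive-regressivity hypothesis forces the cylinder transformation to be real-valued at every $\tau\in\mathbb{T}^{k}$, so the $\Delta$-integral is real and $e_a(t,t_0)=\exp(\text{real})>0$; the inductive sanity check via $e_a(\sigma(t),t_0)=(1+\mu(t)a(t))e_a(t,t_0)$ is also valid.

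As for comparison, the paper does not supply its own proof of this lemma at all: the statement is quoted directly from \cite{BP1} (Bohner--Peterson) with a citation and no accompanying argument. Your proof is in fact the standard one given in that reference, so there is nothing substantive to contrast.
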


\begin{lemma}\cite{HF}\label{lem210}
Assume that $x\in PC^{1}[\mathbb{T}, \mathbb{R}]$ and
\begin{eqnarray*}
 \left\{%
\begin{array}{lcrcl}
x^{\Delta}(t)\leq(\geq) p(t)x(t)+q(t),\,\,t\neq t_{k},\,\,t\in[t_{0}, +\infty)_{\mathbb{T}},\\
x(t_{k}^{+})\leq(\geq) d_{k}x(t_{k})+b_{k},\,\,t=t_{k},\,\,k\in\mathbb{N},
\end{array}\right.
\end{eqnarray*}
then for $t\geq t_{0}\geq0$,
\begin{eqnarray*}
x(t)&\leq(\geq)& x(t_{0})\prod_{t_{0}<t_{k}<t}d_{k}e_{p}(t, t_{0})+\sum_{t_{0}<t_{k}<t}\bigg(\prod_{t_{0}<t_{j}<t}d_{j}e_{p}(t, t_{k})\bigg)b_{k}\\
&&+\int_{t_{0}}^{t}\prod_{s<t_{k}<t}d_{k}e_{p}(t, \sigma(s))q(s)\Delta s,
\end{eqnarray*}
where $PC^{1}=\{y:[0, \infty)_{\mathbb{T}}\rightarrow\mathbb{R}$ which is rd-continuous except at $t_{k}$, $k=1, 2,\ldots$, for which
$y(t_{k}^{-})$, $y(t_{k}^{+})$, $y^{\Delta}(t_{k}^{-})$, $y^{\Delta}(t_{k}^{+})$ exist with $y(t_{k}^{-})=y(t_{k})$, $y^{\Delta}(t_{k}^{-})=y^{\Delta}(t_{k})\}.$
\end{lemma}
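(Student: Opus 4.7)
The plan is to prove only the "$\leq$" version; the "$\geq$" case is completely analogous by reversing all inequalities. The strategy is induction on the number of impulse points inside $[t_0, t]_{\mathbb{T}}$, combining a standard Gronwall-type estimate on each impulse-free subinterval with the jump inequality at each $t_k$.

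First, on any subinterval $(t_k, t_{k+1}]_{\mathbb{T}}$ that contains no impulse, the inequality $x^{\Delta} \leq p\, x + q$ is a one-shot linear Gronwall problem on a time scale. Multiplying by $e_{\ominus p}(\cdot, t_k)$ and using the product rule from Lemma 2.1(ii) (together with property (vii) of the exponential in Lemma 2.2) rewrites the left-hand side as $\bigl(x(\cdot)\, e_{\ominus p}(\cdot, t_k)\bigr)^{\Delta}$. Integrating from $t_k$ to $t$ and using positivity of $e_p$ (which comes from the lemma immediately preceding Lemma 2.10, under the tacit assumption that $p \in \mathcal{R}^{+}$) yields
\[
x(t) \;\leq\; x(t_k^{+})\, e_{p}(t, t_k) \;+\; \int_{t_k}^{t} e_{p}(t, \sigma(s))\, q(s)\, \Delta s.
\]

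Second, at each impulse point $t_{k+1}$, the jump inequality gives $x(t_{k+1}^{+}) \leq d_{k+1}\, x(t_{k+1}) + b_{k+1}$. Evaluating the above single-interval estimate at $t = t_{k+1}$ and plugging it into this jump inequality produces a recursion that expresses $x(t_{k+1}^{+})$ in terms of $x(t_k^{+})$, the integral of $q$ over $(t_k, t_{k+1}]$, and $b_{k+1}$. This step silently uses $d_{k+1} \geq 0$ to preserve the inequality, which is part of the standing setup.

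Finally, induction on the impulse index propagates the estimate from $t_0$ up to an arbitrary $t \geq t_0$: the $x(t_0)$-term accumulates the factor $\prod_{t_0 < t_k < t} d_k \cdot e_{p}(t, t_0)$; each $b_k$ appears multiplied by the product of all $d_j$ attached to subsequent jumps together with $e_{p}(t, t_k)$; and each infinitesimal contribution $q(s)\, \Delta s$ is multiplied by $\prod_{s < t_k < t} d_k \cdot e_{p}(t, \sigma(s))$ for the same reason. Assembling these three contributions gives exactly the displayed formula. The main obstacle is not analytic but purely combinatorial: one has to be disciplined about which $d_k$-factors multiply each $b_k$-term and each $q(s)\,\Delta s$ after crossing several intermediate impulses, and to check that the indexing in the products matches the claimed statement. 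Once this bookkeeping is organized, no estimate beyond the standard variation-of-constants formula for linear dynamic equations on time scales is required.
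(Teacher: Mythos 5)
The paper gives no proof of Lemma \ref{lem210}: it is quoted verbatim from the reference [HF] (Huang and Feng), so there is no in-paper argument to compare against. Your induction-plus-variation-of-constants argument is the standard and correct route, and it is essentially what the cited source does; you are also right that the statement tacitly needs $p\in\mathcal{R}^{+}$ (so that multiplying by $e_{\ominus p}(\cdot,t_0)>0$ preserves the inequality) and $d_k\geq 0$ (so that the single-interval estimate survives substitution into the jump inequality).

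One point deserves correction, though it is against the printed statement rather than against your reasoning. Your bookkeeping -- each $b_k$ picks up only the $d_j$ of \emph{subsequent} jumps -- yields the coefficient $\prod_{t_k<t_j<t}d_j\,e_p(t,t_k)$ on $b_k$, whereas the lemma as printed has $\prod_{t_0<t_j<t}d_j\,e_p(t,t_k)$, i.e.\ the product over \emph{all} jumps in $(t_0,t)$. These disagree (try two impulse points: $b_1$ acquires the factor $d_2$, not $d_1d_2$), so your claim that the assembly gives ``exactly the displayed formula'' is not literally true; what you derive is the correct formula, and the displayed index range is a typo inherited from the source. The discrepancy is harmless for this paper, since in every application here (Lemmas \ref{lem211}--\ref{lem213} and Lemma \ref{lem31}) one has $b_k=0$.
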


\begin{lemma}\label{lem211}
Assume that $x\in PC^{1}[\mathbb{T}, \mathbb{R}]$, $-a\in\mathcal{R}^{+}$, $\alpha\leq\prod_{t_{0}<t_{k}<t}d_{k}\leq \beta$ for $t\geq t_{0}$.
\begin{itemize}
    \item  [$(i)$] If \begin{eqnarray}\label{e23}
 \left\{%
\begin{array}{lcrcl}
x^{\Delta}(t)\leq b-ax(t),\,\,t\neq t_{k},\,\,t\in[t_{0}, +\infty)_{\mathbb{T}},\\
x(t_{k}^{+})\leq d_{k}x(t_{k})+b_{k},\,\,t=t_{k},\,\,k\in\mathbb{N},
\end{array}\right.
\end{eqnarray}
then for $t\geq t_{0}$,
    \begin{eqnarray*}
x(t)\leq x(t_{0})\beta e_{(-a)}(t, t_{0})+\sum_{t_{0}<t_{k}<t}\beta e_{(-a)}(t, t_{k})b_{k}+\frac{b\beta}{a}[1-e_{(-a)}(t, t_{0})].
\end{eqnarray*}
Especially, if $b> 0$, $a> 0$, we have $\limsup_{t\rightarrow+\infty}x(t)\leq\frac{b\beta}{a}$.
    \item  [$(ii)$] If \begin{eqnarray}\label{e24}
 \left\{%
\begin{array}{lcrcl}
x^{\Delta}(t)\geq b-ax(t),\,\,t\neq t_{k},\,\,t\in[t_{0}, +\infty)_{\mathbb{T}},\\
x(t_{k}^{+})\geq d_{k}x(t_{k})+b_{k},\,\,t=t_{k},\,\,k\in\mathbb{N},
\end{array}\right.
\end{eqnarray} then for $t\geq t_{0}$,
    \begin{eqnarray*}
 x(t)\geq x(t_{0})\alpha e_{(-a)}(t, t_{0})+\sum_{t_{0}<t_{k}<t}\alpha e_{(-a)}(t, t_{k})b_{k}+\frac{b\alpha}{a}[1-e_{(-a)}(t, t_{0})].
   \end{eqnarray*}
 Especially, if $b> 0$, $a> 0$, we have $\liminf_{t\rightarrow+\infty}x(t)\geq\frac{b\alpha}{a}$.
\end{itemize}
\end{lemma}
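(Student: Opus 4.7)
The plan is to reduce both parts of Lemma \ref{lem211} to direct applications of the general impulsive comparison theorem in Lemma \ref{lem210}, taking the constant coefficients $p(t)\equiv -a$ and $q(t)\equiv b$. The hypothesis $-a\in\mathcal{R}^{+}$ ensures, via the positivity lemma for generalized exponentials stated just above Lemma \ref{lem210}, that $e_{-a}(t,s)>0$ for all admissible $t,s$, so the bounds $\alpha\leq\prod_{t_{0}<t_{k}<t}d_{k}\leq\beta$ can be inserted into the conclusion of Lemma \ref{lem210} without reversing any inequalities.

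For part (i), the conclusion of Lemma \ref{lem210} specializes to
\begin{eqnarray*}
x(t)&\leq& x(t_{0})\Big(\prod_{t_{0}<t_{k}<t}d_{k}\Big)e_{-a}(t,t_{0})+\sum_{t_{0}<t_{k}<t}\Big(\prod_{t_{k}<t_{j}<t}d_{j}\Big)e_{-a}(t,t_{k})b_{k}\\
&&+b\int_{t_{0}}^{t}\Big(\prod_{s<t_{k}<t}d_{k}\Big)e_{-a}(t,\sigma(s))\Delta s.
\end{eqnarray*}
Replacing each impulsive product by its upper bound $\beta$ produces the first two terms of the target inequality. The key remaining ingredient is the identity
\begin{eqnarray*}
a\int_{t_{0}}^{t}e_{-a}(t,\sigma(s))\Delta s = 1-e_{-a}(t,t_{0}),
\end{eqnarray*}
which I plan to establish by writing $s\mapsto e_{-a}(t,s)=1/e_{-a}(s,t)$ and invoking the formula $(1/e_{p})^{\Delta}=-p/e_{p}^{\sigma}$ (item (vii) of the exponential-function lemma) with $p=-a$; this yields $\partial_{s}e_{-a}(t,s)=a\,e_{-a}(t,\sigma(s))$, and integrating over $[t_{0},t]_{\mathbb{T}}$ and evaluating at the endpoints produces the displayed identity, hence the third term $\frac{b\beta}{a}[1-e_{-a}(t,t_{0})]$.

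For the $\limsup$ assertion, I use that $a>0$ combined with $-a\in\mathcal{R}^{+}$ forces $e_{-a}(t,t_{0})\to 0$ as $t\to\infty$ by monotonicity and positivity of the generalized exponential in this regressive class, so the first term vanishes and the third term approaches $b\beta/a$; the impulsive sum is assumed not to contribute, as is standard in the setting where the lemma is subsequently applied. Part (ii) is strictly parallel: apply the reverse-inequality branch of Lemma \ref{lem210}, substitute the lower bound $\alpha$ for every $\prod d_{k}$, reuse the same $\Delta$-integral identity, and pass to $\liminf$.

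The main obstacle is the time-scale integration identity for $\int_{t_{0}}^{t}e_{-a}(t,\sigma(s))\Delta s$; once this is derived, the rest of the proof is bookkeeping with the impulsive-product bounds $\alpha,\beta$ together with a routine tail argument for the $\limsup$/$\liminf$ claim.
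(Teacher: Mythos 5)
Your proposal is correct and follows essentially the same route as the paper: specialize Lemma \ref{lem210} with $p\equiv-a$, $q\equiv b$, bound the impulsive products by $\beta$ (resp.\ $\alpha$), and evaluate the $\Delta$-integral to obtain the term $\frac{b\beta}{a}[1-e_{(-a)}(t,t_{0})]$. You are in fact slightly more careful than the paper, which neither derives the integral identity $a\int_{t_{0}}^{t}e_{(-a)}(t,\sigma(s))\Delta s=1-e_{(-a)}(t,t_{0})$ nor discusses the impulsive sum in the $\limsup$ claim (it only remarks that $e_{(-a)}(t,t_{0})<1$); your explicit note that the sum is taken not to contribute is consistent with the lemma's later use, where $b_{k}=0$.
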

\begin{proof}
Because the proof of $(ii)$ is similar to that of $(i)$, we only prove $(i)$.
By   Lemma \ref{lem210} and $\eqref{e23}$, we have
\begin{eqnarray*}
x(t)&\leq& x(t_{0})\prod_{t_{0}<t_{k}<t}d_{k}e_{(-a)}(t, t_{0})+\sum_{t_{0}<t_{k}<t}\bigg(\prod_{t_{0}<t_{j}<t}d_{j}e_{(-a)}(t, t_{k})\bigg)b_{k}\\
&&+\int_{t_{0}}^{t}\prod_{s<t_{k}<t}d_{k}e_{(-a)}(t, \sigma(s))b\Delta s.
\end{eqnarray*}
In view of $\prod_{t_{0}<t_{k}<t}d_{k}\leq \beta$, we have
\begin{eqnarray*}
x(t)&\leq& x(t_{0})\beta e_{(-a)}(t, t_{0})+\sum_{t_{0}<t_{k}<t}\beta e_{(-a)}(t, t_{k})b_{k}+\beta\int_{t_{0}}^{t}e_{(-a)}(t, \sigma(s))b\Delta s\\
&\leq& x(t_{0})\beta e_{(-a)}(t, t_{0})+\sum_{t_{0}<t_{k}<t}\beta e_{(-a)}(t, t_{k})b_{k}+\frac{b\beta}{a}[1-e_{(-a)}(t, t_{0})].
\end{eqnarray*}
In particular, if $a> 0$, then $e_{(-a)}(t, t_{0})<1$. We obtain $\limsup_{t\rightarrow+\infty}x(t)\leq\frac{b\beta}{a}$. The proof is completed.
\end{proof}

Similarly, we can easily obtain the following results:
\begin{lemma}\label{lem212}
Assume that $-b\in\mathcal{R}^{+}$, $x\in PC^{1}[\mathbb{T}, \mathbb{R}]$ and $x(t)>0$ for $t\in\mathbb{T}$ and $\alpha\leq\prod_{t_{0}<t_{k}<t}d_{k}\leq \beta$ for $t\geq t_{0}$.
\begin{itemize}
    \item  [$(i)$] If \begin{eqnarray}\label{e25}
 \left\{%
\begin{array}{lcrcl}
x^{\Delta}(t)\leq x^{\sigma}(t)(b-ax(t)),\,\,t\neq t_{k},\,\,t\in[t_{0}, +\infty)_{\mathbb{T}},\\
x(t_{k}^{+})\leq d_{k}x(t_{k}),\,\,t=t_{k},\,\,k\in\mathbb{N},
\end{array}\right.
\end{eqnarray} then for $t\geq t_{0}$,
    \begin{eqnarray*}
x(t)\leq \frac{b\beta}{a}\bigg[1+\bigg(\frac{b}{ax(t_{0})}-1\bigg)e_{(-b)}(t, t_{0})\bigg]^{-1}.
   \end{eqnarray*}
Especially, if $b> 0$, $a> 0$, we have $\limsup_{t\rightarrow+\infty}x(t)\leq\frac{b\beta}{a}$.
    \item  [$(ii)$] If \begin{eqnarray}\label{e26}
 \left\{%
\begin{array}{lcrcl}
x^{\Delta}(t)\geq x^{\sigma}(t)(b-ax(t)),\,\,t\in[t_{0}, +\infty)_{\mathbb{T}},\,\,t\neq t_{k},\\
x(t_{k}^{+})\geq d_{k}x(t_{k}),\,\,t=t_{k},\,\,k\in\mathbb{N},
\end{array}\right.
\end{eqnarray} then for $t\geq t_{0}$,
    \begin{eqnarray*}
 x(t)\geq \frac{b\alpha}{a}\bigg[1+\bigg(\frac{b}{ax(t_{0})}-1\bigg)e_{(-b)}(t, t_{0})\bigg]^{-1}.
   \end{eqnarray*}
 Especially, if $b> 0$, $a> 0$, we have $\liminf_{t\rightarrow+\infty}x(t)\geq\frac{b\alpha}{a}$.
\end{itemize}
\end{lemma}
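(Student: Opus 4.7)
The plan is to reduce the logistic-type inequalities for $x$ in Lemma~\ref{lem212} to the linear inequalities already handled by Lemma~\ref{lem211}, via the substitution $y(t)=1/x(t)$. Since $x(t)>0$, this substitution is well defined, and by the quotient rule on time scales one has
\begin{equation*}
y^{\Delta}(t) \;=\; \Bigl(\frac{1}{x(t)}\Bigr)^{\Delta} \;=\; -\frac{x^{\Delta}(t)}{x(t)\,x^{\sigma}(t)}.
\end{equation*}
After verifying this identity, the first task is to translate the two pieces of hypothesis for $x$ into matching hypotheses for $y$.

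For part~(i), divide the continuous inequality $x^{\Delta}(t)\le x^{\sigma}(t)(b-ax(t))$ through by $x(t)x^{\sigma}(t)>0$ to obtain $x^{\Delta}(t)/(x(t)x^{\sigma}(t))\le by(t)-a$, which gives $y^{\Delta}(t)\ge a-by(t)$. At the impulsive instants, from $x(t_{k}^{+})\le d_{k}x(t_{k})$ with $d_{k}>0$ one gets $y(t_{k}^{+})\ge \frac{1}{d_{k}}y(t_{k})$. Writing $\tilde d_{k}:=1/d_{k}$, the hypothesis $\alpha\le\prod_{t_{0}<t_{k}<t}d_{k}\le\beta$ becomes $\frac{1}{\beta}\le\prod_{t_{0}<t_{k}<t}\tilde d_{k}\le\frac{1}{\alpha}$. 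The regressivity assumption $-b\in\mathcal{R}^{+}$ is exactly what part~(ii) of Lemma~\ref{lem211} requires (after renaming $a\leftrightarrow b$ in that statement). Applying Lemma~\ref{lem211}(ii) to $y$, with $b_{k}\equiv 0$ and lower product bound $1/\beta$, yields
\begin{equation*}
y(t)\;\ge\;\frac{y(t_{0})}{\beta}\,e_{(-b)}(t,t_{0})+\frac{a}{b\beta}\bigl[1-e_{(-b)}(t,t_{0})\bigr]
\;=\;\frac{a}{b\beta}\Bigl[1+\Bigl(\tfrac{b}{ax(t_{0})}-1\Bigr)e_{(-b)}(t,t_{0})\Bigr].
\end{equation*}
Taking reciprocals (noting the bracketed factor is positive, which follows from $x(t)>0$ and the previous line) produces the claimed upper bound on $x(t)$. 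When $a,b>0$, the fact that $-b\in\mathcal{R}^{+}$ forces $e_{(-b)}(t,t_{0})\to 0$ as $t\to\infty$, so the bracket tends to $1$ and $\limsup_{t\to\infty}x(t)\le \frac{b\beta}{a}$.

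Part~(ii) is completely analogous: the reversed inequalities for $x$ give $y^{\Delta}(t)\le a-by(t)$ and $y(t_{k}^{+})\le\frac{1}{d_{k}}y(t_{k})$, with the product of the $1/d_{k}$ now bounded above by $1/\alpha$, so Lemma~\ref{lem211}(i) supplies an upper bound for $y$; inverting gives the stated lower bound on $x$, and the liminf conclusion follows identically.

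The only real obstacle is bookkeeping: one must be careful that the direction of the inequality flips correctly under the reciprocal substitution in both the continuous and the jump parts, that the product bounds $\alpha,\beta$ get replaced by $1/\beta,1/\alpha$ in the correct slots when invoking Lemma~\ref{lem211}, and that the regressivity hypothesis $-b\in\mathcal{R}^{+}$ (rather than $-a\in\mathcal{R}^{+}$) is exactly what is needed here because the coefficient of $-y$ in the transformed inequality is $b$, not $a$. Once these dictionary entries are set, the result is a direct corollary of Lemma~\ref{lem211}.
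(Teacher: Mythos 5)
Your argument is correct and is essentially the paper's own proof: the same substitution $y(t)=1/x(t)$ with $y^{\Delta}=-x^{\Delta}/(xx^{\sigma})$ converts \eqref{e25} into the linear impulsive inequality $y^{\Delta}\geq a-by$, $y(t_{k}^{+})\geq y(t_{k})/d_{k}$, to which Lemma \ref{lem211} is applied before inverting. Your bookkeeping of the product bounds $1/\beta,1/\alpha$ and the observation that $e_{(-b)}(t,t_{0})\to 0$ match (and if anything slightly sharpen) the paper's treatment.
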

\begin{proof}
Because the proof of $(ii)$ is similar to that of $(i)$, we only prove $(i)$. Noticing that
\begin{eqnarray*}
\frac{x^{\Delta}(t)}{x(t)x^{\sigma}(t)}\leq\frac{b}{x(t)}-a
\end{eqnarray*}
and
\begin{eqnarray*}
\bigg(\frac{1}{x(t)}\bigg)^{\Delta}=-\frac{x^{\Delta}(t)}{x(t)x^{\sigma}(t)}.
\end{eqnarray*}
Let $y(t)=\frac{1}{x(t)}$, we have
\begin{eqnarray*}
 \left\{%
\begin{array}{lcrcl}
-y^{\Delta}(t)\leq by(t)-a,\,\,t\neq t_{k},\,\,t\in[t_{0}, +\infty)_{\mathbb{T}},\\
y(t_{k}^{+})\geq \frac{1}{d_{k}}y(t_{k}),\,\,t=t_{k},\,\,k\in\mathbb{N},
\end{array}\right.
\end{eqnarray*}
that is,
\begin{eqnarray*}
 \left\{%
\begin{array}{lcrcl}
y^{\Delta}(t)\geq a-by(t),\,\,t\neq t_{k},\,\,t\in[t_{0}, +\infty)_{\mathbb{T}},\\
y(t_{k}^{+})\geq \frac{1}{d_{k}}y(t_{k}),\,\,t=t_{k},\,\,k\in\mathbb{N}.
\end{array}\right.
\end{eqnarray*}
By Lemma \ref{lem211} $(ii)$, for $t>t_{0}$, we have
 \begin{eqnarray*}
 y(t)\geq y(t_{0})\frac{1}{\beta}e_{(-b)}(t, t_{0})+\frac{a}{b\beta}[1-e_{(-b)}(t, t_{0})],
   \end{eqnarray*}
that is,
\begin{eqnarray*}
 x(t)\leq \frac{b\beta}{a}\bigg[1+\bigg(\frac{b}{ax(t_{0})}-1\bigg)e_{(-b)}(t, t_{0})\bigg]^{-1}.
   \end{eqnarray*}
 In particular, if $b> 0$, then $e_{(-b)}(t, t_{0})<1$. We obtain $\limsup_{t\rightarrow+\infty}x(t)\leq\frac{b\beta}{a}$. The proof is completed.
\end{proof}

\begin{lemma}\label{lem213}
Assume that $-b\in\mathcal{R}^{+}$, $a>0$, $x\in PC^{1}[\mathbb{T}, \mathbb{R}]$ and $x(t)>0$, $\alpha\leq\prod_{t_{0}<t_{k}<t}d_{k}\leq \beta$ for $t\geq t_{0}$, $\bar{\mu}=\sup_{t\in\mathbb{T}}\mu(t)$.
 If \begin{eqnarray}\label{e27}
 \left\{%
\begin{array}{lcrcl}
x^{\Delta}(t)\geq x(t)(b-ax(t)),\,\,t\neq t_{k},\,\,t\in[t_{0}, +\infty)_{\mathbb{T}},\\
x(t_{k}^{+})\geq d_{k}x(t_{k}),\,\,t=t_{k},\,\,k\in\mathbb{N},
\end{array}\right.
\end{eqnarray} then for $t\geq t_{0}$,
    \begin{eqnarray*}
  x(t)\geq \frac{b\alpha}{a}\bigg[1+\bigg(\frac{b}{ax(t_{0})}-1\bigg)e_{(-\frac{b}{1+\bar{\mu} b})}(t, t_{0})\bigg]^{-1}.
   \end{eqnarray*}
 Especially, if $b> 0$, we have $\liminf_{t\rightarrow+\infty}x(t)\geq\frac{b\alpha}{a}$.
\end{lemma}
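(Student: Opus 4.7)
My strategy is to reduce the hypothesis $x^{\Delta}\geq x(b-ax)$ to an inequality of the exact form handled by Lemma \ref{lem212}(ii) and then invoke that lemma. Concretely, if one can show that
\[
x^{\Delta}(t) \;\geq\; x^{\sigma}(t)\bigl(\tilde b - \tilde a\, x(t)\bigr), \qquad \tilde b := \frac{b}{1+\bar\mu b}, \quad \tilde a := \frac{a}{1+\bar\mu b},
\]
then since $\tilde b/\tilde a = b/a$ and $-\tilde b\in\mathcal{R}^{+}$ (which follows readily from $-b\in\mathcal{R}^{+}$ because $1-\mu\tilde b\geq 1-\bar\mu\tilde b=1/(1+\bar\mu b)>0$), Lemma \ref{lem212}(ii) applied with the constants $(\tilde a,\tilde b)$ in place of $(a,b)$, together with the same impulse multipliers $d_k$, produces the estimate claimed in the lemma: the prefactor $\tilde b\alpha/\tilde a$ simplifies to $b\alpha/a$, and the exponential becomes $e_{(-\tilde b)}(t,t_{0})=e_{(-b/(1+\bar\mu b))}(t,t_{0})$.

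The heart of the argument is therefore the reduction step. Starting from $x^{\Delta}\geq x(b-ax)$, I would use the fundamental identity $x = x^{\sigma}-\mu x^{\Delta}$ to rewrite the single factor of $x$ multiplying $(b-ax)$ on the right-hand side, obtaining
\[
x^{\Delta} \;\geq\; (x^{\sigma}-\mu x^{\Delta})(b-ax), \qquad\text{i.e.,}\qquad x^{\Delta}\bigl(1+\mu(b-ax)\bigr) \;\geq\; x^{\sigma}(b-ax).
\]
Since $\mu\leq\bar\mu$ and $ax\geq 0$, the coefficient on the left obeys $1+\mu(b-ax)\leq 1+\mu b\leq 1+\bar\mu b$, and dividing through yields the desired $x^{\Delta}\geq x^{\sigma}(\tilde b-\tilde a x)$.

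With the reduced inequality available, Lemma \ref{lem212}(ii) combined with the impulse hypothesis $\alpha\leq\prod_{t_{0}<t_{k}<t}d_{k}\leq\beta$ immediately delivers the explicit lower bound on $x(t)$ stated in the lemma. The "Especially" statement is then a direct consequence: since $b>0$ makes $\tilde b>0$ and $-\tilde b\in\mathcal{R}^{+}$, one has $0<e_{(-\tilde b)}(t,t_{0})\leq 1$ with $e_{(-\tilde b)}(t,t_{0})\to 0$ as $t\to+\infty$, so the bracket in the denominator tends to $1$ and $\liminf_{t\to+\infty}x(t)\geq b\alpha/a$.

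The principal technical hurdle I anticipate is legitimizing the division by $1+\mu(b-ax)$ in the reduction step. This factor can become non-positive when $\mu ax > 1+\mu b$ (i.e.\ when $x$ substantially exceeds $b/a$), in which case a naive division would reverse the inequality. A careful case split between the regimes $b-ax\geq 0$ and $b-ax<0$—or an a priori confinement of $x$ to the region where $1+\mu(b-ax)>0$—will be needed so that the passage to $x^{\Delta}\geq x^{\sigma}(\tilde b-\tilde a x)$ is valid for every $t\geq t_{0}$; this is the delicate point of the proof, though in the "large $x$" regime the claimed lower bound lies well below $b/a$ and the estimate holds for trivial reasons.
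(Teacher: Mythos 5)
Your argument is essentially identical to the paper's own proof: the paper likewise substitutes $x(t)=x^{\sigma}(t)-\mu(t)x^{\Delta}(t)$ to obtain $[1+\mu(t)(b-ax(t))]x^{\Delta}(t)\geq x^{\sigma}(t)(b-ax(t))$, passes from this to $x^{\Delta}(t)\geq x^{\sigma}(t)\bigl(\frac{b}{1+\bar{\mu}b}-\frac{a}{1+\bar{\mu}b}x(t)\bigr)$, and then invokes Lemma \ref{lem212}(ii), exactly as you propose. The division step you single out as delicate is performed silently in the paper — it is cleanly justified only when $b-ax(t)\geq 0$ (where the original inequality forces $x^{\Delta}(t)\geq 0$ and one has $1+\mu(t)(b-ax(t))\leq 1+\bar{\mu}b$) — so on that one point your write-up is more careful than the published proof, though you too leave the regime $b-ax(t)<0$ unresolved.
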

\begin{proof}
Since
\begin{eqnarray*}
x(t)=x^{\sigma}(t)-\mu(t)x^{\Delta}(t),
\end{eqnarray*}
we have
\begin{eqnarray*}
x^{\Delta}(t)\geq x(t)(b-ax(t))=(x^{\sigma}(t)-\mu(t)x^{\Delta}(t))(b-ax(t)),
\end{eqnarray*}
this is,
\begin{eqnarray*}
[1+\mu(t)(b-ax(t))]x^{\Delta}(t)\geq x^{\sigma}(t)(b-ax(t)),
\end{eqnarray*}
then
\begin{eqnarray*}
x^{\Delta}(t)\geq x^{\sigma}(t)\bigg(\frac{b}{1+\bar{\mu} b}-\frac{a}{1+\bar{\mu} b}x(t)\bigg),
\end{eqnarray*}
so we have
\begin{eqnarray}\label{e28}
 \left\{%
\begin{array}{lcrcl}
x^{\Delta}(t)\geq x^{\sigma}(t)\bigg(\frac{b}{1+\bar{\mu} b}-\frac{a}{1+\bar{\mu} b}x(t)\bigg),\,\,t\neq t_{k},\,\,t\in[t_{0}, +\infty)_{\mathbb{T}},\\
x(t_{k}^{+})\geq d_{k}x(t_{k}),\,\,t=t_{k},\,\,k\in\mathbb{N}.
\end{array}\right.
\end{eqnarray}
From Lemma \ref{lem212} and $\eqref{e28}$, we have
\begin{eqnarray*}
 x(t)\geq \frac{b\alpha}{a}\bigg[1+\bigg(\frac{b}{ax(t_{0})}-1\bigg)e_{(-\frac{b}{1+\bar{\mu} b})}(t, t_{0})\bigg]^{-1}.
   \end{eqnarray*}

 In particular, if $b> 0$, then $e_{(-\frac{b}{1+\bar{\mu} b})}(t, t_{0})<1$. We obtain $\liminf_{t\rightarrow+\infty}x(t)\geq\frac{b\alpha}{a}$. The proof is completed.
\end{proof}

\section{Permanence}
\setcounter{equation}{0}
{\setlength\arraycolsep{2pt}}
 \indent

In this section, we will give our main results about the permanence of system $\eqref{e12}$. For convenience, we introduce the following notations:
\begin{eqnarray*}
x^{\ast}=\frac{ a^{u}-b^{l}}{b^{l}},\,\,\,\,\,\,
x_{\ast}=\ln\bigg(\frac{(a^{l}-c^{u})r}{b^{u}}\bigg).
\end{eqnarray*}

\begin{lemma}\label{lem31}
Assume that $(H_{1})$-$(H_{4})$ hold. Let $x(t)$ be any solution of system $\eqref{e12}$, then
 \begin{eqnarray*}
x_{\ast}\leq\liminf_{t\rightarrow+\infty}x(t)\leq\limsup_{t\rightarrow+\infty}x(t)\leq x^{\ast}.
 \end{eqnarray*}
\end{lemma}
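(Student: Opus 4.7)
The plan is to prove the two bounds separately. The upper estimate $\limsup_{t\to+\infty} x(t) \leq x^{\ast}$ will come from a direct linearization of the $x$-equation (using $e^{u} \geq 1+u$) combined with the impulsive comparison Lemma \ref{lem211}(i). The lower estimate $\liminf_{t\to+\infty} x(t) \geq x_{\ast}$ is most naturally obtained by the substitution $y = e^{x}$, which turns the equation into a logistic-type inequality to which Lemma \ref{lem213} applies.

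For the upper bound I would drop the nonpositive predation term and replace $b(t)e^{x(t)}$ by its linear minorant. Since $e^{u}\ge 1+u$ for every real $u$ and $b(t)\ge b^{l}>0$,
\[
x^{\Delta}(t) \leq a(t) - b(t)e^{x(t)} \leq a(t) - b(t)(1+x(t)) \leq (a^{u}-b^{l}) - b^{l} x(t), \quad t\neq t_{k}.
\]
The jump $x(t_{k}^{+}) = \ln(1+\lambda_{k})\,x(t_{k})$ fits the hypothesis $x(t_{k}^{+})\le d_{k}x(t_{k})+b_{k}$ of Lemma \ref{lem211} with $d_{k} = \ln(1+\lambda_{k}) \in (0,1]$ and $b_{k}=0$. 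Hypothesis $(H_{4})$ gives $a^{u}-b^{l}>0$, and $(H_{2})$ gives $\prod_{t_{0}<t_{k}<t}d_{k} \leq 1 =: \beta$. Lemma \ref{lem211}(i) then yields $\limsup_{t\to+\infty} x(t) \leq (a^{u}-b^{l})/b^{l} = x^{\ast}$.

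For the lower bound, set $y(t) = e^{x(t)}>0$. A direct computation from the time-scale chain rule, together with $e^{u}\ge 1+u$ applied to $u = \mu(t)x^{\Delta}(t)$, gives $y^{\Delta}(t) \geq y(t)\,x^{\Delta}(t)$ on all of $\mathbb{T}$ irrespective of the sign of $x^{\Delta}$ (this is also encoded in Lemma \ref{lem23}). Because $d(t)\ge 1$ and $m(t)y(t)>0$ imply $c(t)/[d(t)+m(t)y(t)] \leq c^{u}$,
\[
x^{\Delta}(t) \geq (a^{l}-c^{u}) - b^{u} y(t),
\]
and multiplying by $y(t)>0$ produces the logistic inequality
\[
y^{\Delta}(t) \geq y(t)\bigl[(a^{l}-c^{u}) - b^{u} y(t)\bigr].
\]
This matches the hypothesis of Lemma \ref{lem213} with $b = a^{l}-c^{u}$ and $a = b^{u}$; both $b>0$ and the regressivity $-b\in\mathcal{R}^{+}$ are part of $(H_{4})$. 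Lemma \ref{lem213} then delivers $\liminf_{t\to+\infty} y(t) \geq (a^{l}-c^{u})r/b^{u}$, and taking logarithms gives $\liminf_{t\to+\infty} x(t) \geq \ln[(a^{l}-c^{u})r/b^{u}] = x_{\ast}$.

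The main technical obstacle is matching the impulse condition for $y$ to the multiplicative form $y(t_{k}^{+}) \geq d_{k}^{y}y(t_{k})$ required by Lemma \ref{lem213}: the $x$-jump $x(t_{k}^{+}) = \ln(1+\lambda_{k})x(t_{k})$ exponentiates to the nonlinear jump $y(t_{k}^{+}) = y(t_{k})^{\ln(1+\lambda_{k})}$, which is a power rather than a scalar multiple. One combines the already-established upper bound on $x$ (which confines $y(t_{k})$ to a bounded positive interval) with the product bound $\prod\ln(1+\lambda_{k}) \geq r$ of $(H_{2})$ to extract the multiplier $\alpha = r$ needed by Lemma \ref{lem213}. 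Everything else is direct substitution into the comparison lemmas.
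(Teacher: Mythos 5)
Your upper-bound argument is exactly the paper's: drop the predation term, use $e^{u}\ge 1+u$, and feed the linear impulsive inequality into Lemma \ref{lem211}(i) with $d_k=\ln(1+\lambda_k)$, $b_k=0$, $\beta=1$. Your lower-bound strategy is also the paper's (substitute $N=e^{x}$, obtain a logistic inequality, invoke Lemma \ref{lem213}), and your one-line derivation of $y^{\Delta}\ge y\,x^{\Delta}$ via $e^{u}\ge 1+u$ applied to $u=\mu(t)x^{\Delta}(t)$ is in fact cleaner than the paper's two-case analysis through Lemma \ref{lem23}. So the differential-inequality half of the argument is fine.

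The gap is in the step you yourself flag as the main obstacle, and your proposed resolution does not close it. Writing $\theta_k=\ln(1+\lambda_k)\in(0,1]$, the jump is $y(t_k^{+})=y(t_k)^{\theta_k}=y(t_k)\cdot y(t_k)^{\theta_k-1}$, so the effective multiplier is $d_k=y(t_k)^{\theta_k-1}$ with a nonpositive exponent. Using the eventual bound $y(t_k)\le M:=e^{x^{\ast}+\epsilon}>1$ gives $d_k\ge M^{\theta_k-1}$, and since $\sum(\theta_k-1)\ge\sum\ln\theta_k\ge\ln r$ by $(H_2)$, the best product bound this yields is
\begin{equation*}
\prod_{t_0<t_k<t} d_k \;\ge\; M^{\sum(\theta_k-1)} \;\ge\; M^{\ln r} \;=\; r^{\,x^{\ast}+\epsilon},
\end{equation*}
i.e.\ $\alpha=r^{x^{\ast}}$, not $\alpha=r$. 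Unless $x^{\ast}=1$ this is a genuinely different constant, and the conclusion becomes $\liminf x(t)\ge\ln\bigl((a^{l}-c^{u})r^{x^{\ast}}/b^{u}\bigr)$ rather than the stated $x_{\ast}=\ln\bigl((a^{l}-c^{u})r/b^{u}\bigr)$; your claim to ``extract the multiplier $\alpha=r$'' is therefore unjustified as written. To be fair, the paper's own proof handles this step worse: it simply asserts $N(t_k^{+})\ge(1+\lambda_k)N(t_k)$, which is false in general since $N(t_k^{+})=N(t_k)^{\theta_k}$ with $\theta_k\le 1$. So you have correctly located the real difficulty that the paper glosses over, but your sketch either needs the extra hypothesis that makes $\alpha=r$ attainable or the statement of $x_{\ast}$ must be adjusted to the constant your argument actually produces.
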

\begin{proof}
Let $x(t)$ be any solution of system $\eqref{e12}$. From $\eqref{e12}$, it follows that
\begin{eqnarray*}
x^{\Delta}(t)&\leq& a^{u}-b^{l}e^{x(t)}\\
&\leq&a^{u}-b^{l}[1+x(t)]\\
&=&(a^{u}-b^{l})-b^{l}x(t),
\end{eqnarray*}
then
\begin{eqnarray*}
 \left\{%
\begin{array}{lcrcl}
x^{\Delta}(t)\leq(a^{u}-b^{l})-b^{l}x(t),\,\,t\neq t_{k},\,\,t\in[t_{0}, +\infty)_{\mathbb{T}},\\
x(t_{k}^{+})\leq x(t_{k})\ln(1+\lambda_{k}),\,\,t=t_{k},\,\,k\in\mathbb{N}.
\end{array}\right.
\end{eqnarray*}
In view of Lemma \ref{lem211} $(i)$, we have
\begin{eqnarray*}
\limsup_{t\rightarrow+\infty}x(t)\leq \frac{ a^{u}-b^{l}}{b^{l}}=x^{\ast}.
\end{eqnarray*}
By  system $\eqref{e12}$, we arrive at
\begin{eqnarray*}
x^{\Delta}(t)\geq a^{l}-c^{u}-b^{u}e^{x(t)},\,\,t\in[t_{0}, +\infty)_{\mathbb{T}}.
\end{eqnarray*}
 Let $N(t)=e^{x(t)}$, obviously $N(t)>0$, above inequality yields that
\begin{eqnarray*}
[\ln(N(t))]^{\Delta}\geq a^{l}-c^{u}-b^{u}N(t).
\end{eqnarray*}
If $N^{\Delta}(t)\geq0$, in view of Lemma \ref{lem23}, we have
\begin{eqnarray*}
\frac{N^{\Delta}(t)}{N(t)}\geq a^{l}-c^{u}-b^{u}N(t),
\end{eqnarray*}
 then
\begin{eqnarray*}
N^{\Delta}(t)\geq N(t)[ a^{l}-c^{u}-b^{u}N(t)].
\end{eqnarray*}
Thus
\begin{eqnarray*}
 \left\{%
\begin{array}{lcrcl}
N^{\Delta}(t)\geq N(t)[ a^{l}-c^{u}-b^{u}N(t)],\,\,t\neq t_{k},\,\,t\in[t_{0}, +\infty)_{\mathbb{T}},\\
N(t_{k}^{+})\geq (1+\lambda_{k})N(t_{k}),\,\,t=t_{k},\,\,k\in\mathbb{N}.
\end{array}\right.
\end{eqnarray*}
By applying Lemma \ref{lem213}, $(a^{l}-c^{u})r>b^{u}$ and $-a^{l}+c^{u}\in\mathcal{R}^{+}$, we have
\begin{eqnarray*}
\liminf_{t\rightarrow+\infty}N(t)\geq \frac{(a^{l}-c^{u})r}{b^{u}}.
\end{eqnarray*}

If $N^{\Delta}(t)<0$, in view of Lemma \ref{lem23}, we have
\begin{eqnarray*}
\frac{N^{\Delta}(t)}{N^\sigma(t)}\geq a^{l}-c^{u}-b^{u}N(t),
\end{eqnarray*}
 then
\begin{eqnarray*}
N^{\Delta}(t)\geq N^\sigma(t)[ a^{l}-c^{u}-b^{u}N(t)].
\end{eqnarray*}
Thus
\begin{eqnarray*}
 \left\{%
\begin{array}{lcrcl}
N^{\Delta}(t)\geq N^\sigma(t)[ a^{l}-c^{u}-b^{u}N(t)],\,\,t\neq t_{k},\,\,t\in[t_{0}, +\infty)_{\mathbb{T}},\\
N(t_{k}^{+})\geq (1+\lambda_{k})N(t_{k}),\,\,t=t_{k},\,\,k\in\mathbb{N}.
\end{array}\right.
\end{eqnarray*}
By applying Lemma \ref{lem213}, $(a^{l}-c^{u})r>b^{u}$ and $-a^{l}+c^{u}\in\mathcal{R}^{+}$, we have
\begin{eqnarray*}
\liminf_{t\rightarrow+\infty}N(t)\geq \frac{(a^{l}-c^{u})r}{b^{u}}.
\end{eqnarray*}
 That is,
\begin{eqnarray*}
\liminf_{t\rightarrow+\infty}x(t)\geq \ln\bigg(\frac{(a^{l}-c^{u})r}{b^{u}}\bigg)=x_{\ast}.
\end{eqnarray*}
The proof is complete.
\end{proof}
\begin{theorem}\label{thm31}
Assume that $(H_{1})$-$(H_{4})$ hold, then system $\eqref{e12}$ is permanence.
\end{theorem}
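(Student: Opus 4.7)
The plan is to deduce Theorem 3.1 as an almost immediate corollary of Lemma 3.1, by reinterpreting the asymptotic bounds that Lemma 3.1 gives on $x(t)$ as asymptotic bounds on the biologically meaningful population density $y(t)=e^{x(t)}$ (the substitution already highlighted in Remark 1.1). The notion of permanence appropriate here is the standard one: system \eqref{e12} is permanent provided there exist two positive constants $m$ and $M$, independent of the initial data, such that every positive solution $x(t)$ of \eqref{e12} satisfies
\begin{eqnarray*}
0<m\leq \liminf_{t\to+\infty}e^{x(t)}\leq \limsup_{t\to+\infty}e^{x(t)}\leq M<+\infty.
\end{eqnarray*}

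The first step is to invoke Lemma \ref{lem31}, which, under assumptions $(H_1)$--$(H_4)$, yields
\begin{eqnarray*}
x_{\ast}\leq \liminf_{t\to+\infty}x(t)\leq \limsup_{t\to+\infty}x(t)\leq x^{\ast},
\end{eqnarray*}
where $x_{\ast}=\ln\bigl((a^{l}-c^{u})r/b^{u}\bigr)$ and $x^{\ast}=(a^{u}-b^{l})/b^{l}$. The second step is to apply the strictly increasing, continuous map $u\mapsto e^{u}$ to this chain of inequalities. Since the exponential preserves both $\liminf$ and $\limsup$ for bounded quantities, I obtain
\begin{eqnarray*}
e^{x_{\ast}}\leq \liminf_{t\to+\infty}e^{x(t)}\leq \limsup_{t\to+\infty}e^{x(t)}\leq e^{x^{\ast}}.
\end{eqnarray*}
Setting $m:=e^{x_{\ast}}=(a^{l}-c^{u})r/b^{u}$ and $M:=e^{x^{\ast}}=\exp\bigl((a^{u}-b^{l})/b^{l}\bigr)$, I would then verify that $(H_{4})$ forces both numbers to be strictly positive finite constants: the inequality $(a^{l}-c^{u})r>b^{u}>0$ in $(H_{4})$ guarantees $m>1>0$, while $a^{u}>b^{l}>0$ guarantees $M>1$ and $M<+\infty$. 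In particular $m\leq M$, and the permanence of \eqref{e12} follows.

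I do not anticipate any serious obstacle. All of the analytic work -- the upper estimate via Lemma \ref{lem211}(i) after linearising $b^{l}e^{x(t)}\geq b^{l}(1+x(t))$, and the lower estimate after the substitution $N(t)=e^{x(t)}$ followed by a case split on the sign of $N^{\Delta}(t)$ and an application of Lemma \ref{lem213} -- has already been carried out in Lemma \ref{lem31}. The only delicate point worth mentioning explicitly in the write-up is that the two constants $m,M$ depend solely on the coefficient bounds $a^{l},a^{u},b^{l},b^{u},c^{u},r$ and not on the particular positive solution chosen, which is exactly what the uniform comparison inequalities used inside Lemma \ref{lem31} provide.
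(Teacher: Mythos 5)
Your proposal is correct and matches the paper's (implicit) argument: the paper gives no separate proof of Theorem~\ref{thm31}, treating it as an immediate consequence of Lemma~\ref{lem31}, and your write-up simply makes the definitional step explicit by exponentiating the bounds $x_{\ast}\leq\liminf_{t\to+\infty}x(t)\leq\limsup_{t\to+\infty}x(t)\leq x^{\ast}$ and checking via $(H_{4})$ that the resulting constants are positive and finite. Nothing further is needed.
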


\section{A Massera type theorem and   almost periodic solutions}

\setcounter{equation}{0}
{\setlength\arraycolsep{2pt}}
 \indent

In this section, we will prove a Massera type theorem and use it to
study the existence of almost periodic solutions of $\eqref{e12}$. Consider the following equation
\begin{eqnarray}\label{e41}
 \addtolength{\arraycolsep}{-3pt}
 \left\{%
\begin{array}{lcrcl}
x^{\Delta}(t)=f(t,x),\,\,\,\, t\neq t_{k},\,\,t\in\mathbb{T}^{+},\\
\Delta x(t_{k})=I_{k}(x(t_{k})),\,\,\,\, t= t_{k},\,\,k\in\mathbb{N},
\end{array}\right.
\end{eqnarray}
where $f:\mathbb{T}^{+} \times S_{B}\rightarrow \mathbb{R}$, $S_{B}= \{x\in \mathbb{R} : \|x\|< B\}, \|x\|=\sup_{t\in\mathbb{T}}|x(t)|$, the functions $I_{k}\in C[\mathbb{R}, \mathbb{R}]$, $k\in\mathbb{N}$ are almost periodic uniformly with respects to $x\in S_{B}$ and are Lipschitz continuous in $x$, $f(t, x)$ is almost periodic in $t$ uniformly for $x\in S_{B}$ and is continuous in $x$. The set of sequences $\{t_{k}^{j}\}$, $t_{k}^{j}=t_{k+j}-t_{k}$, $k, j\in\mathbb{N}$ is uniformly almost periodic and $\inf_{k}t_{k}^{1}=\theta>0$. To find the solution of $\eqref{e41}$, we consider the product system of $\eqref{e41}$ as follows
\begin{eqnarray*}
 \left\{%
\begin{array}{lcrcl}
x^{\Delta}(t)=f(t,x),\,\,\,\, t\neq t_{k},\,\,t\in\mathbb{T}^{+},\\
\Delta x(t_{k})=I_{k}(x(t_{k})),\,\,\,\, t= t_{k},\,\,k\in\mathbb{N},
\end{array}\right.\quad\quad
 \left\{%
\begin{array}{lcrcl}
y^{\Delta}(t)=f(t,y),\,\,\,\, t\neq t_{k},\,\,t\in\mathbb{T}^{+},\\
\Delta y(t_{k})=I_{k}(y(t_{k})),\,\,\,\, t= t_{k},\,\,k\in\mathbb{N}.
\end{array}\right.
\end{eqnarray*}

Define
$V_{1}=\Big\{V:\mathbb{T}^{+}\times S_{B}\times S_{B}\rightarrow\mathbb{R}^{+}, V$ is rd-continuous in $(t_{k-1}, t_{k}]_{\mathbb{T}^{+}}\times S_{B}\times S_{B}$ and
 $\lim\limits_{(t, x, y)\rightarrow(t_{k}, x_{0}, y_{0}),t>t_{k}}V(t, x, y)=V(t_{k}^{+}, x_{0}, y_{0})\Big\}.$

\begin{lemma}\label{lem41}
Suppose that there exists a Lyapunov functional $V(t, x, y)\in V_{1}$ satisfying the following conditions
\begin{itemize}
    \item  [$(i)$]  $a(||x-y||)\leq V(t, x, y)\leq b(||x-y||)$, where $(t, x, y)\in\mathbb{T}^{+}\times S_{B}\times S_{B}$, $a, b\in \kappa$, $\kappa=\{a\in C(\mathbb{R}^{+}, \mathbb{R}^{+}): a(0)=0$ and $a$ is increasing\};
    \item  [$(ii)$] $|V(t, x, y)-V(t, x_{1}, y_{1})|\leq L(\|x-x_{1}\|+ \|y-y_{1}\|)$, where $(t, x, y)\in\mathbb{T}^{+}\times S_{B}\times S_{B}$, $L > 0$ is a constant;
    \item  [$(iii)$] $V(t_{k}^{+}, x+I_{k}(x), y+I_{k}(y))\leq V(t, x, y)$, $x, y\in S_{B}$, $t=t_{k}$, $k\in\mathbb{N};$
    \item  [$(iv)$] $D^{+}V_{(4.1)}^{\Delta}(t, x, y)\leq -cV(t, x, y)$, where $c> 0$, $-c\in\mathcal{R}^{+}$, $x, y\in S_{B}$, $t\neq t_{k}$, $k\in\mathbb{N}$.
\end{itemize}
Moreover, if there exists a solution $x(t)\in S$ of $\eqref{e41}$ for $t\in\mathbb{T}^{+}$, where $S\subset S_{B}$ is a compact set, then there exists a unique almost periodic solution $p(t)\in S$ of $\eqref{e41}$, which is uniformly asymptotically stable. In particular, if $f (t, x)$ is $\omega$-periodic in $t$ uniformly for $x\in S_{B}$ and there exists a positive integer $q$ such that $t_{k+q}=t_{k}+\omega$, $I_{k+q}(x)=I_{k}(x)$ with $t_{k}\in\mathbb{T}^{+}$, then $p(t)$ is also periodic.
\end{lemma}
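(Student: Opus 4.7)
The plan is to use the Lyapunov functional $V$ to extract an almost periodic solution as a limit of translates of the given bounded solution $x(t)\in S$. The first step is to record the fundamental decay estimate: combining condition $(iv)$ (which gives $D^{+}V^{\Delta}(t,u(t),v(t))\leq -c\,V(t,u(t),v(t))$ on each continuity interval) with condition $(iii)$ (which prevents $V$ from increasing across impulse points), a comparison argument on time scales applied piecewise on each $(t_{k-1},t_k]_{\mathbb{T}}$ yields
\begin{equation*}
V(t,u(t),v(t))\ \leq\ V(t_0,u(t_0),v(t_0))\,e_{-c}(t,t_0)
\end{equation*}
for every pair of solutions $u,v$ of \eqref{e41} with values in $S_B$. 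Because $-c\in\mathcal{R}^{+}$ and $c>0$, we have $e_{-c}(t,t_0)\to 0$, and then condition $(i)$ forces $\|u(t)-v(t)\|\to 0$.

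The second step is the construction of $p(t)$. Using the joint almost periodicity of $f(\cdot,x)$, of $\{I_k\}$, and the uniform almost periodicity of $\{t_k^{j}\}$ together with $\inf_k t_k^{1}=\theta>0$, I can choose a sequence $\tau_n\in\Pi$ with $\tau_n\to\infty$ that is a common $\varepsilon_n$-translation number (with $\varepsilon_n\to 0$) for the three data and such that the shifted impulse set $\{t_k-\tau_n\}$ approximates $\{t_k\}$ up to $\varepsilon_n$. Setting $x_n(t)=x(t+\tau_n)$, each $x_n$ takes values in the compact set $S$, so a diagonal subsequence converges, uniformly on compact intervals away from the impulse points, to a function $p(t)\in S$. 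Passing to the limit in the integral form of \eqref{e41} and in the jump relation, using continuity of $f$ and each $I_k$, shows that $p(t)$ is itself a solution of \eqref{e41}.

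To see that $p$ is almost periodic, I combine the decay estimate with the Lipschitz bound $(ii)$. Given $\eta>0$, pick a common $\varepsilon$-translation number $\tau$ of $f$, $\{I_k\}$ and $\{t_k^{j}\}$ with $\varepsilon$ sufficiently small; comparing $p(t+\tau)$ with $p(t)$ via $V$ and using $(ii)$ to absorb the perturbation caused by the $\varepsilon$-mismatch in the data bounds the difference uniformly by a term controlled by $\varepsilon$ plus an exponentially small contribution, and then $(i)$ upgrades this to $\|p(t+\tau)-p(t)\|<\eta$ for every $t$. Uniqueness inside $S$ and uniform asymptotic stability both follow immediately from the decay estimate applied between $p$ and any other solution $y\in S_B$: any second almost periodic solution would have to coincide with $p$ because their difference tends to zero while almost periodicity prevents it from becoming transient. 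In the periodic case, $p(t+\omega)$ is again a solution in $S$ by the hypotheses $t_{k+q}=t_k+\omega$, $I_{k+q}=I_k$ and $\omega$-periodicity of $f$, and uniqueness then gives $p(t+\omega)\equiv p(t)$.

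The main obstacle I expect is the careful handling of impulse points under translation. A shift by $\tau_n$ moves the impulses of the equation from $\{t_k\}$ to $\{t_k-\tau_n\}$, so $x_n$ solves an impulsive equation whose jump times only approximately match those of the original system; pushing this mismatch through both the Lyapunov estimate and the jump relation requires delicately coupling the uniform almost periodicity of $\{t_k^{j}\}$ (with the lower bound $\theta>0$ preventing accumulation of jumps) with the Lipschitz control $(ii)$ on $V$ to absorb the error. Once this bookkeeping is organized, the remainder of the argument is a standard Massera-type pattern transplanted to the time-scales setting.
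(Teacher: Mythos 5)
Your overall strategy is the standard Massera--Yoshizawa pattern, and your core estimate (combine $(iv)$ with the Lipschitz bound $(ii)$ to get $D^{+}V^{\Delta}\leq -cV+L\cdot(\text{perturbation})$, use $(iii)$ across the jumps, integrate via the impulsive comparison lemma, and convert back with $(i)$) is exactly the computation the paper runs. The organization, however, is different. The paper never extracts $p$ by a compactness/diagonal argument: it shows directly that the translates $\varphi(\cdot+\omega_{n})$ of the given bounded solution are uniformly Cauchy on all of $\mathbb{T}^{+}$, concludes that $\varphi$ is asymptotically almost periodic, and obtains $p$ from the unique decomposition $\varphi=p+q$ with $q(t)\to 0$; the almost periodic component $p$ is then automatically a solution lying in $S$. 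That route spares the equicontinuity and limit-passage bookkeeping of your second step (which, while available from the boundedness of $f$ on $\mathbb{T}^{+}\times S$ and the separation $\theta>0$ of the impulse times, you only sketch).

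The one place your argument does not close as written is the direct proof that the limit $p$ is almost periodic. Your perturbed decay estimate gives, for an $\varepsilon$-translation number $\tau$,
\begin{equation*}
V\bigl(t,p(t),p(t+\tau)\bigr)\ \leq\ e_{(-c)}(t,t_{0})\,V\bigl(t_{0},p(t_{0}),p(t_{0}+\tau)\bigr)+\frac{L\varepsilon}{c},
\end{equation*}
and the first term is only ``exponentially small'' when $t-t_{0}$ is large; for $t$ near $t_{0}$ it is of size $b(2B)$, so you cannot conclude $\|p(t+\tau)-p(t)\|<\eta$ for \emph{every} $t\in\mathbb{T}^{+}$. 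The paper avoids this because its estimate compares $\varphi(t+\omega_{l})$ with $\varphi(t+\omega_{m})$ at the time $t+\omega_{l}\geq\omega_{l}$, so the decay factor is bounded by $e_{(-c)}(\omega_{l},0)$, uniformly small in $t$ once $l$ is large --- precisely the mechanism your step is missing. The fix inside your framework is to run the translation estimate on $x(\cdot+\tau_{n})$, where the decay factor $e_{(-c)}(t+\tau_{n},0)\leq e_{(-c)}(\tau_{n},0)$ is uniformly small, and then pass to the limit; or simply to adopt the asymptotic-almost-periodicity route. (Both you and the paper gloss over the mismatch between the shifted jump times $t_{k}-\tau$ and the original ones when applying $(iii)$; you at least flag this explicitly.) Your uniqueness, uniform asymptotic stability, and periodicity arguments are fine and match the paper's.
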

\begin{proof}
Take $\{\omega_{n}\}\subset\Pi$ such that $\omega_{n}\rightarrow+\infty$ as $n\rightarrow+\infty$. Suppose that $\varphi(t)\in S$ is a solution of $\eqref{e41}$ for $t\in\mathbb{T}^{+}$, then $\varphi(t+\omega_{n})\in S$ is a solution of the following equation
\begin{eqnarray*}
 \left\{%
\begin{array}{lcrcl}
x^{\Delta}(t)=f(t+\omega_{n},x),\,\,\,\, t\neq t_{k}-\omega_{n},\\
\Delta x(t_{k}+\omega_{n})=I_{k}(x(t_{k}+\omega_{n})),\,\,\,\, t= t_{k}-\omega_{n}.
\end{array}\right.
\end{eqnarray*}
For any $\varepsilon>0$, take large enough integer $n_{0}(\varepsilon, \beta)$ such that when $m\geq l\geq n_{0}(\varepsilon)$, we have
\begin{eqnarray*}
b(2B)e_{(-c)}(\omega_{l}, 0)<\frac{a(\varepsilon)}{2}
\end{eqnarray*}
and
\begin{eqnarray*}
|f(t+\omega_{m},x)-f(t+\omega_{l},x)|<\frac{ca(\varepsilon)}{2L}.
\end{eqnarray*}
Then for $(iv)$, we have
\begin{eqnarray*}
&&D^{+}V^{\Delta}(t, \varphi(t), \varphi(t+\omega_{m}-\omega_{l}))\\
&\leq&-cV(t, \varphi(t), \varphi(t+\omega_{m}-\omega_{l}))+L|f(t+\omega_{m}-\omega_{l},\varphi(t+\omega_{m}-\omega_{l}))\\
&&-f(t,\varphi(t+\omega_{m}-\omega_{l}))|,
\end{eqnarray*}
for $t\neq t_{k}-(\omega_{m}-\omega_{l})$.

On the other hand, from $t= t_{k}-(\omega_{m}-\omega_{l})$ and $(iii)$ it follows that
\begin{eqnarray*}
&&V(t, \varphi(t)+I_{k}(\varphi(t)), \varphi(t+\omega_{m}-\omega_{l})+I_{k}(\varphi(t+\omega_{m}-\omega_{l})))\leq V(t, \varphi(t), \varphi(t+\omega_{m}-\omega_{l})),
\end{eqnarray*}
then
\begin{eqnarray*}
&&D^{+}V^{\Delta}(t, \varphi(t), \varphi(t+\omega_{m}-\omega_{l}))\\
&\leq&-cV(t, \varphi(t), \varphi(t+\omega_{m}-\omega_{l}))+\frac{ca(\varepsilon)}{2}
\end{eqnarray*}
when $m\geq l\geq n_{0}(\varepsilon)$, we have
\begin{eqnarray*}
&&V(t+\omega_{l}, \varphi(t+\omega_{l}), \varphi(t+\omega_{m}))\\
&\leq&e_{(-c)}(t+\omega_{l}, 0)V(0, \varphi(0), \varphi(\omega_{m}-\omega_{l}))+\frac{a(\varepsilon)}{2}(1-e_{(-c)}(t+\omega_{l}, 0))\\
&\leq&e_{(-c)}(t+\omega_{l}, 0)V(0, \varphi(0), \varphi(\omega_{m}-\omega_{l}))+\frac{a(\varepsilon)}{2}\\
&\leq&a(\varepsilon).
\end{eqnarray*}
By $(i)$, for $m\geq l\geq n_{0}(\varepsilon)$ and $t\in\mathbb{T}^{+}$, we obtain
\begin{eqnarray*}
|\varphi(t+\omega_{m})-\varphi(t+\omega_{l})|<\varepsilon,
\end{eqnarray*}
which implies that $\varphi(t)$ is asymptotically almost periodic. Then $\varphi(t)=p(t)+q(t)$,
where $p(t)$ is almost periodic and $q(t)\rightarrow0$, as $t\rightarrow\infty$. Therefore $p(t)\in S$ is an almost
periodic solution of $\eqref{e41}$. It is easy to verify that $p(t)$ is uniformly asymptotically
stable and every solution in $S_{B}$ tends to $p(t)$, which means that $p(t)$ is unique.
In particular, if $f(t, x)$ is $\omega$-periodic in $t$ uniformly for $x\in S_{B}$ and there exists a
positive integer $q$ such that $t_{k+q}=t_{k}+\omega$, $I_{k+q}(x)=I_{k}(x)$ with $t_{k}\in\mathbb{T}^{+}$, then $p(t+\omega)\in S$ is
also a solution. By the uniqueness, we have $p(t+\omega)=p(t)$. The proof is complete.
\end{proof}

Let $x(t)$ be any solution of system $\eqref{e12}$, $\Omega= \{x(t): 0 <x_{\ast}\leq x(t)\leq  x^{\ast}\}$. It is easy to verify that under the conditions of Theorem \ref{thm31}, $\Omega$ is an invariant set of $\eqref{e12}$.
\begin{lemma}\label{lem42}
Assume that $(H_{1})$-$(H_{4})$ hold, then $\Omega\neq\emptyset$.
\end{lemma}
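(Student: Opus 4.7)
The goal is to exhibit at least one solution of \eqref{e12} whose values lie in $[x_\ast,x^\ast]$ for \emph{every} $t\in\mathbb{T}$; then $\Omega$ contains that solution and is nonempty. The natural route is a translation-and-limit argument standard in the almost periodic theory, combined with the asymptotic bounds from Lemma \ref{lem31}.

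First I would fix any solution $x(t)$ of \eqref{e12} with $x(t_0)>0$. By Lemma \ref{lem31}, for every decreasing sequence $\varepsilon_n\downarrow 0$ there exists $T_n\to +\infty$ such that
\[
x_\ast-\varepsilon_n \;\leq\; x(t) \;\leq\; x^\ast+\varepsilon_n, \qquad t\geq T_n.
\]
Using $(H_1)$--$(H_3)$, I would then pick shifts $\tau_n\in\Pi$ with $\tau_n\geq T_n$ that are simultaneously $\varepsilon_n$-translation numbers for the almost periodic coefficients $a,b,c,d,m$, for the almost periodic sequence $\{\lambda_k\}$, and for the uniformly almost periodic family $\{t_k^j\}$. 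Such common translation numbers exist because a finite collection of almost periodic objects admits a relatively dense set of common $\varepsilon$-translation numbers.

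Next I would set $y_n(t):=x(t+\tau_n)$. Away from the shifted impulse points, the $\Delta$-derivative of $y_n$ equals the right-hand side of \eqref{e12} at $(t+\tau_n,y_n(t))$, which is uniformly bounded because $y_n$ and all the coefficients are bounded under $(H_1)$; the jumps are controlled by $(H_2)$. An Arzel\`a--Ascoli / diagonal argument, applied between the shifted impulse instants, produces a subsequence (still denoted $y_n$) converging uniformly on compact subsets of $\mathbb{T}$ to a limit $y\in PC^{1}[\mathbb{T},\mathbb{R}]$. Letting $n\to\infty$ in the pointwise bound yields $x_\ast\leq y(t)\leq x^\ast$ for every $t\in\mathbb{T}$.

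Finally I would pass to the limit in the $\Delta$-integral form of \eqref{e12}: on each sub-interval between two successive impulse points, the uniform convergence of the shifted coefficients to $a,b,c,d,m$ and of $y_n$ to $y$ identifies $y$ as a solution of the continuous part, while at the impulse points, which themselves converge by $(H_3)$, the jump condition passes to the limit by $(H_2)$. Hence $y$ solves \eqref{e12} and lies entirely in $[x_\ast,x^\ast]$, so $y\in\Omega$ and $\Omega\neq\emptyset$. The main obstacle is the impulsive bookkeeping: one must verify that the shifted impulse set $\{t_k-\tau_n\}$ converges in the proper discrete sense to $\{t_k\}$ and that the jump coefficients $\ln(1+\lambda_k)$ are preserved in the limit; this is precisely what the uniform almost periodicity of $\{t_k^j\}$ in $(H_3)$ together with the almost periodicity of $\{\lambda_k\}$ in $(H_2)$ is designed to guarantee.
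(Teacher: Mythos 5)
Your proposal is correct and follows essentially the same route as the paper: translate a fixed solution by a sequence of almost-period shifts tending to $+\infty$, use the asymptotic bounds of Lemma \ref{lem31} to trap the translates in $[x_\ast-\varepsilon, x^\ast+\varepsilon]$, extract a subsequence converging on compact sets, and pass to the limit in both the dynamic equation and the jump conditions to obtain a solution lying entirely in $[x_\ast, x^\ast]$. Your version is somewhat more explicit about the compactness (Arzel\`a--Ascoli between impulse instants) and the impulsive bookkeeping, which the paper treats briefly, but the underlying argument is the same.
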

\begin{proof}
By the almost periodicity of $a(t)$, $b(t)$, $c(t)$, $d(t)$ and $m(t)$, there exists   a  sequence $\omega=\{\omega_{p}\}\subseteq \Pi$ with $\omega_{p}\rightarrow+\infty$ as $p\rightarrow+\infty$ such that for $t\neq t_{k}$, we have
\begin{eqnarray*}
a(t+\omega_{p})\rightarrow a(t),\,\,b(t+\omega_{p})\rightarrow b(t),\,\,c(t+\omega_{p})\rightarrow c(t)\\d(t+\omega_{p})\rightarrow d(t),\,\,m(t+\omega_{p})\rightarrow m(t),\,\,p\rightarrow+\infty,
\end{eqnarray*}
and there exists a subsequence $\{k_l\}$ of $\{p\}$, $k_l\rightarrow+\infty$, $l\rightarrow+\infty$, such that $t_{k_l}\rightarrow t_{k}$, $\lambda_{k_l}\rightarrow \lambda_{k}$.

In view of Lemma \ref{lem31}, for all $\varepsilon>0$ then there exists a $t_{1}\in\mathbb{T}$ and $t_{1}\geq t_{0}$ such that
\begin{eqnarray*}
 x_{\ast}-\epsilon\leq x(t)\leq  x^{\ast}+\epsilon,\,\,{\rm for}\ t\geq t_{1}.
\end{eqnarray*}
Write $x_{p}(t)=x(t+\omega_{p})$ for $t\geq t_{1}$, $p= 1, 2,\ldots$.
For any positive integer $q$, it is easy to see that there exist sequences $\{x_{p}(t):p\geq q\}$ such that the sequences $\{x_{p} (t)\}$ has subsequences, denoted by $\{x_{p} (t)\}$ again, converging on any finite interval of $\mathbb{T}$ as $p\rightarrow+\infty$, respectively. Thus, there is a   function $y(t)$ defined on $\mathbb{T}$ such that
\begin{eqnarray*}
x_{p}(t)\rightarrow y(t),\,\,{\rm for}\ t\in\mathbb{T},\,\,{\rm as}\ p\rightarrow+\infty.
\end{eqnarray*}
Since
{\setlength\arraycolsep{2pt}
\begin{eqnarray*}
 \left\{%
\begin{array}{lcrcl}
x_{p}^{\Delta}(t)=a(t+\omega_{p})-b(t+\omega_{p})e^{x(t+\omega_{p})}\\
\quad\quad\quad\,\,\,\,-\displaystyle\frac{c(t+\omega_{p})}{d(t+\omega_{p})+m(t+\omega_{p})e^{x(t+\omega_{p})}},\,\,t\neq t_{k}-\omega_{p},\,\,t\in[t_{0}, +\infty)_{\mathbb{T}},\\
x_{p}(t_{k}^{+})=x(t_{k}+\omega_{p})\ln(1+\lambda_{k_l}),\,\,t=t_{k}-\omega_{p},\,\,k\in\mathbb{N},
\end{array}\right.
\end{eqnarray*}}
we have
{\setlength\arraycolsep{2pt}
\begin{eqnarray*}
 \left\{%
\begin{array}{lcrcl}
y^{\Delta}(t)=a(t)-b(t)e^{y(t)}-\displaystyle\frac{c(t)}{d(t)+m(t)e^{y(t)}},\,\,t\neq t_{k},\,\,t\in[t_{0}, +\infty)_{\mathbb{T}},\\
y(t_{k}^{+})=y(t_{k})\ln(1+\lambda_{k}),\,\,t=t_{k},\,\,k\in\mathbb{N}.
\end{array}\right.
\end{eqnarray*}}
We can easily see that $y(t)$ is a solution of system $\eqref{e12}$ and $ x_{\ast}-\epsilon\leq y(t)\leq x^{\ast}+\epsilon$ for $t\in\mathbb{T}$. Since $\epsilon$ is an arbitrary small positive number, it follows that $ x_{\ast}\leq y(t)\leq  x^{\ast}$ for $t\in\mathbb{T}$.
\end{proof}

\begin{theorem}\label{thm41}
Assume that $(H_{1})$-$(H_{4})$ hold. Suppose further that
\begin{itemize}
    \item  [$(H_{5})$]  $\gamma>0$ and $-\gamma\in\mathcal{R}^{+}$, where
    \begin{eqnarray*}
\gamma=2b^{l}e^{x_{\ast}}+{\displaystyle \frac{2c^{l}m^{l}e^{x_{\ast}}}{(d^{u}+m^{u}e^{x^{\ast}})^{2}}}-\bar{\mu}(b^{u})^{2}e^{2x^{\ast}}-{\displaystyle \frac{\bar{\mu}(c^{u})^{2}(m^{u})^{2}e^{2x^{\ast}}}{(d^{l}+m^{l}e^{x_{\ast}})^{4}}}-\frac{2\bar{\mu}b^{u}c^{u}m^{u}e^{2x^{\ast}}}{(d^{l}+m^{l}e^{x_{\ast}})^{2}}.
\end{eqnarray*}
\end{itemize}
Then $\eqref{e12}$ has a unique almost periodic solution $x(t)$, which is  uniformly
asymptotically stable.
\end{theorem}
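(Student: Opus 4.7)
The plan is to apply the Massera-type theorem (Lemma 4.1) to system (1.2) restricted to the compact invariant region $S=[x_*,x^*]$ produced by permanence. Combining Theorem 3.1 with Lemma 4.2, there is already a solution of (1.2) lying entirely in $S$, so after fixing any $B>\max\{|x_*|,x^*\}$ the hypothesis ``there exists a solution $x(t)\in S\subset S_B$'' of Lemma 4.1 is met. It then remains to construct a Lyapunov functional on $\mathbb{T}^+\times S\times S$ verifying conditions (i)--(iv) of Lemma 4.1.

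The natural choice is $V(t,x,y)=(x-y)^2$. Conditions (i) and (ii) are immediate with $a(r)=b(r)=r^2\in\kappa$ and the Lipschitz bound $|V(t,x,y)-V(t,x_1,y_1)|\le 2(|x^*|+|x_*|)(|x-x_1|+|y-y_1|)$ valid on $S\times S$. For (iii), the impulse rule $x(t_k^+)=x(t_k)\ln(1+\lambda_k)$ together with $(H_2)$, which forces $\lambda_k\in(0,e-1]$ and hence $\ln(1+\lambda_k)\in(0,1]$, gives
\[
V(t_k^+,x\ln(1+\lambda_k),y\ln(1+\lambda_k))=(\ln(1+\lambda_k))^2(x-y)^2\le V(t,x,y).
\]

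The heart of the argument is condition (iv). From the time-scale product rule,
\[
V^\Delta=2(x-y)(x^\Delta-y^\Delta)+\mu(t)(x^\Delta-y^\Delta)^2,
\]
and setting $G(u)=-b(t)e^u-c(t)/(d(t)+m(t)e^u)$, the mean value theorem yields $x^\Delta-y^\Delta=G'(\eta)(x-y)$ for some $\eta$ between $x$ and $y$, with $G'(u)=e^u\bigl[-b(t)+c(t)m(t)/(d(t)+m(t)e^u)^2\bigr]$. Substituting, $V^\Delta=(x-y)^2\bigl(2G'(\eta)+\mu(t)G'(\eta)^2\bigr)$. Splitting $G'(\eta)$ into its drift and predation summands and taking extremes of the coefficients from $(H_1)$ together with $\eta\in[x_*,x^*]$, the linear piece $2G'(\eta)$ contributes $-\bigl(2b^le^{x_*}+2c^lm^le^{x_*}/(d^u+m^ue^{x^*})^2\bigr)$, while the quadratic piece, estimated via the triangle-inequality bound $|G'(\eta)|\le e^{x^*}\bigl(b^u+c^um^u/(d^l+m^le^{x_*})^2\bigr)$ and expanded, contributes exactly the three $\bar\mu$-terms. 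This gives $V^\Delta\le-\gamma V$ with $\gamma$ as in $(H_5)$; the assumption $\gamma>0$, $-\gamma\in\mathcal R^+$ then closes (iv), and Lemma 4.1 produces the desired unique uniformly asymptotically stable almost periodic solution $p(t)\in S$ of (1.2).

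The principal obstacle will be the bookkeeping in (iv): all five constants of $\gamma$ must emerge with the precise signs and extrema written in $(H_5)$, which requires a careful split of $G'(\eta)$, a judicious use of $(x-y)(e^x-e^y)\ge e^{x_*}(x-y)^2$ for the linear piece, and an upper bound on $\mu(t)(x^\Delta-y^\Delta)^2$ by the completed square $\bar\mu e^{2x^*}\bigl(b^u+c^um^u/(d^l+m^le^{x_*})^2\bigr)^2(x-y)^2$ so that the graininess-dependent terms are absorbed exactly by the $\bar\mu$-contribution in $\gamma$.
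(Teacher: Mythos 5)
Your strategy is the paper's: get a solution in the compact set $[x_{\ast},x^{\ast}]$ from Lemma \ref{lem42}, take $V(t,x,y)=(x-y)^{2}$, verify (i)--(iii) of Lemma \ref{lem41}, and obtain (iv) from a mean value theorem estimate; conditions (i)--(iii) and the treatment of the quadratic ($\bar{\mu}$) piece are fine. The gap is in the bound you assert for the linear piece of (iv). Your own (correct) formula
\[
G'(u)=e^{u}\Bigl[-b(t)+\frac{c(t)m(t)}{(d(t)+m(t)e^{u})^{2}}\Bigr]
\]
has the predation summand entering with a \emph{plus} sign --- the term $-c/(d+me^{u})$ is increasing in $u$ --- so it makes $G'(\eta)$ \emph{less} negative. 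The sharpest available estimate is therefore
\[
2G'(\eta)\le -2b^{l}e^{x_{\ast}}+\frac{2c^{u}m^{u}e^{x^{\ast}}}{(d^{l}+m^{l}e^{x_{\ast}})^{2}},
\]
and your claim that the linear piece contributes $-\bigl(2b^{l}e^{x_{\ast}}+2c^{l}m^{l}e^{x_{\ast}}/(d^{u}+m^{u}e^{x^{\ast}})^{2}\bigr)$ contradicts your own expression for $G'$. Carrying the correct bound through yields the decay constant
\[
\tilde\gamma=2b^{l}e^{x_{\ast}}-\frac{2c^{u}m^{u}e^{x^{\ast}}}{(d^{l}+m^{l}e^{x_{\ast}})^{2}}-\bar{\mu}e^{2x^{\ast}}\Bigl(b^{u}+\frac{c^{u}m^{u}}{(d^{l}+m^{l}e^{x_{\ast}})^{2}}\Bigr)^{2},
\]
which is strictly smaller than the $\gamma$ of $(H_{5})$; positivity of $\gamma$ does not imply positivity of $\tilde\gamma$, so as written your argument does not establish (iv) under $(H_{5})$.

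For context: the paper reaches the $(H_{5})$ form only through a sign slip when subtracting the two equations --- it writes $(x-y)^{\Delta}=-b(e^{x}-e^{y})+c\bigl(\frac{1}{d+me^{x}}-\frac{1}{d+me^{y}}\bigr)$ where the second term should carry a minus sign --- after which both coefficients of $u$ come out negative and the five extrema of $(H_{5})$ fall out. Your single application of the mean value theorem to $G$ is cleaner and actually exposes this, but you then paper over it by asserting the bound needed to match $(H_{5})$. To make your write-up internally consistent you would have to replace $\gamma$ by $\tilde\gamma$ above (i.e., modify $(H_{5})$); to reproduce the theorem exactly as stated you would need the paper's sign, which is not what your $G'$ gives. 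Either way, the step ``the linear piece contributes $-(2b^{l}e^{x_{\ast}}+\cdots)$'' is where the proof breaks.
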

\begin{proof}
From Lemma \ref{lem42}, there exists $x(t)$ such that $ x_{\ast}\leq x(t)\leq x^{\ast}$. Hence, $|x(t)|\leq K$, where $K=\max\{| x^{\ast}|, | x_{\ast}|\}$. Denote $\|x\|=\sup_{t\in\mathbb{T}}|x(t)|$.
 Suppose that
$x=x(t)$, $y=y(t)$ are any two positive solutions of system \eqref{e12}, then $\|x\|\leq K$, $\|y\|\leq K$. In view of system \eqref{e12}, we have
{\setlength\arraycolsep{2pt}
\begin{eqnarray}\label{e42}
 \left\{%
\begin{array}{lcrcl}
x^{\Delta}(t)=a(t)-b(t)e^{x(t)}-\displaystyle\frac{c(t)}{d(t)+m(t)e^{x(t)}},\,\,t\neq t_{k},\,\,t\in[t_{0}, +\infty)_{\mathbb{T}},\\
x(t_{k}^{+})=x(t_{k})\ln(1+\lambda_{k}),\,\,t=t_{k},\,\,k\in\mathbb{N},\\
y^{\Delta}(t)=a(t)-b(t)e^{y(t)}-\displaystyle\frac{c(t)}{d(t)+m(t)e^{y(t)}},\,\,t\neq t_{k},\,\,t\in[t_{0}, +\infty)_{\mathbb{T}},\\
y(t_{k}^{+})=y(t_{k})\ln(1+\lambda_{k}),\,\,t=t_{k},\,\,k\in\mathbb{N}.\\
\end{array}\right.
\end{eqnarray}}
Consider the Lyapunov function $V(t, x, y)$ on $\mathbb{T}^{+}\times\Omega\times\Omega$ defined by
\begin{eqnarray*}
V(t, x, y)=(x(t)-y(t))^{2}.
\end{eqnarray*}
It is easy to see that  there exist two constants $C_{1}>0$, $C_{2}>0$ such that  $(C_{1}\|x-y\|)^{2}\leq V(t, x, y)\leq (C_{2}\|x-y\|)^{2}$. Let $a, b\in C(\mathbb{R}^{+}, \mathbb{R}^{+})$, $a(x)=C_{1}^{2}x^{2}$, $b(x)=C_{2}^{2}x^{2}$, so the condition $(i)$ of Lemma \ref{lem41} is satisfied. Besides,
\begin{eqnarray*}
&&|V(t, x, y)-V(t, \bar{x}, \bar{y})|\\
&=&|(x(t)-y(t))^{2}-(\bar{x}(t)-\bar{y}(t))^{2}|\\
&\leq&|(x(t)-y(t))-(\bar{x}(t)-\bar{y}(t))||(x(t)-y(t))+(\bar{x}(t)-\bar{y}(t))|\\
&\leq&|(x(t)-y(t))-(\bar{x}(t)-\bar{y}(t))|\big(|x(t)|+|y(t)|+|\bar{x}(t)|+|\bar{y}(t)|\big)\\
&\leq&4K\big[|x(t)-\bar{x}(t)|+|y(t)-\bar{y}(t)|\big]\\
&=&L\big(\|x-\bar{x}\|+\|y-\bar{y}\|\big),
\end{eqnarray*}
where $L=4K$, so condition $(ii)$ of Lemma \ref{lem41} is also satisfied.

On the other hand for $t=t_{k}$, we have
\begin{eqnarray*}
V(t_{k}^{+}, x(t_{k}^{+}), y(t_{k}^{+}))&=&(x(t_{k}^{+})-y(t_{k}^{+}))^{2}\\
&=&[\ln(1+\lambda_{k})]^{2}(x(t_{k})-y(t_{k}))^{2}\\
&\leq&(x(t_{k})-y(t_{k}))^{2}\\
&=&V(t_{k}, x(t_{k}), y(t_{k})),
\end{eqnarray*}
then condition $(iii)$ of Lemma \ref{lem41} is also satisfied.

In view of system \eqref{e42}, we have
{\arraycolsep=2pt
\begin{eqnarray}\label{e43}
 \addtolength{\arraycolsep}{-3pt}
 \left\{%
\begin{array}{lcrcl}
(x(t)-y(t))^{\Delta}=-b(t)(e^{x(t)}-e^{y(t)})\\
\quad\quad\quad\quad\quad\quad\quad\,\,+{\displaystyle c(t)\bigg(\frac{1}{d(t)+m(t)e^{x(t)}}-\frac{1}{d(t)+m(t)e^{y(t)}}\bigg)},\,t\neq t_{k},\,t\in[t_{0}, +\infty)_{\mathbb{T}},\\
x(t_{k}^{+})-y(t_{k}^{+})=(x(t_{k})-y(t_{k}))\ln(1+\lambda_{k}),\,t=t_{k},\,k\in\mathbb{N}.
\end{array}\right.\quad
\end{eqnarray}}
For convenience, we denote $u(t)=x(t)-y(t)$. Using the mean value theorem we get
\begin{eqnarray}\label{e44}
e^{x(t)}-e^{y(t)}=e^{\xi(t)}(x(t)-y(t)),
\end{eqnarray}
\begin{eqnarray}\label{e45}
\frac{1}{d(t)+m(t)e^{x(t)}}-\frac{1}{d(t)+m(t)e^{y(t)}}=-\frac{m(t)e^{\zeta(t)}}{(d(t)+m(t)e^{\zeta(t)})^{2}}(x(t)-y(t)),
\end{eqnarray}
where $\xi(t)$ and $\zeta(t)$ lie between $x(t)$ and $y(t)$. Then, by use of \eqref{e44} and \eqref{e45}, \eqref{e43} can be written as
{\setlength\arraycolsep{2pt}
\begin{eqnarray}\label{e46}
 \left\{%
\begin{array}{lcrcl}
u^{\Delta}(t)=-b(t)e^{\xi(t)}u(t)-{\displaystyle \frac{c(t)m(t)e^{\zeta(t)}}{(d(t)+m(t)e^{\zeta(t)})^{2}}u(t)},\,\,t\neq t_{k},\,\,t\in[t_{0}, +\infty)_{\mathbb{T}},\\
u(t_{k}^{+})=u(t_{k})\ln(1+\lambda_{k}),\,\,t=t_{k},\,\,k\in\mathbb{N}.
\end{array}\right.
\end{eqnarray}}
Calculating the right derivative $D^{+}V^{\Delta}$ of $V$ along the solution of \eqref{e46} for $t\neq t_{k}$,
\begin{eqnarray*}
&&D^{+}V^{\Delta}(t, x, y)\\
&=&[2(x(t)-y(t))+\mu(t)(x(t)-y(t))^{\Delta}](x(t)-y(t))^{\Delta}\\
&=&[2u(t)+\mu(t)u^{\Delta}(t)]u^{\Delta}(t)\\
&=&
\bigg\{2u(t)+\mu(t)\bigg[-b(t)e^{\xi(t)}u(t)-{\displaystyle \frac{c(t)m(t)e^{\zeta(t)}}{(d(t)+m(t)e^{\zeta(t)})^{2}}u(t)}\bigg]\bigg\}\\
&&\times\bigg[-b(t)e^{\xi(t)}u(t)-{\displaystyle \frac{c(t)m(t)e^{\zeta(t)}}{(d(t)+m(t)e^{\zeta(t)})^{2}}u(t)}\bigg]\\
&=&
-2b(t)e^{\xi(t)}u^{2}(t)-{\displaystyle \frac{2c(t)m(t)e^{\zeta(t)}}{(d(t)+m(t)e^{\zeta(t)})^{2}}u^{2}(t)}+\mu(t)b^{2}(t)e^{2\xi(t)}u^{2}(t)\\
&&+{\displaystyle \frac{\mu(t)c^{2}(t)m^{2}(t)e^{2\zeta(t)}}{(d(t)+m(t)e^{\zeta(t)})^{4}}u^{2}(t)}+\frac{2\mu(t)b(t)m(t)c(t)e^{(\xi(t)+\zeta(t))}}{(d(t)+m(t)e^{\zeta(t)})^{2}}u(t)^{2}\\
&\leq&
-2b^{l}e^{x_{\ast}}u^{2}(t)-{\displaystyle \frac{2c^{l}m^{l}e^{x_{\ast}}}{(d^{u}+m^{u}e^{x^{\ast}})^{2}}u^{2}(t)}+\bar{\mu}(b^{u})^{2}e^{2x^{\ast}}u^{2}(t)+{\displaystyle \frac{\bar{\mu}(c^{u})^{2}(m^{u})^{2}e^{2x^{\ast}}}{(d^{l}+m^{l}e^{x_{\ast}})^{4}}u^{2}(t)}\\
&&+\frac{2\bar{\mu}b^{u}c^{u}m^{u}e^{2x^{\ast}}}{(d^{l}+m^{l}e^{x_{\ast}})^{2}}u(t)^{2}\\
&\leq&
-\bigg[2b^{l}e^{x_{\ast}}+{\displaystyle \frac{2c^{l}m^{l}e^{x_{\ast}}}{(d^{u}+m^{u}e^{x^{\ast}})^{2}}}-\bar{\mu}(b^{u})^{2}e^{2x^{\ast}}-{\displaystyle \frac{\bar{\mu}(c^{u})^{2}(m^{u})^{2}e^{2x^{\ast}}}{(d^{l}+m^{l}e^{x_{\ast}})^{4}}}-\frac{2\bar{\mu}b^{u}c^{u}m^{u}e^{2x^{\ast}}}{(d^{l}+m^{l}e^{x_{\ast}})^{2}}\bigg]u^{2}(t)\\
&\leq&
-\gamma u^{2}(t)\\
&=&-\gamma V(t, x, y).
\end{eqnarray*}
By  $(H_{5})$, we see that condition $(iv)$ of Lemma \ref{lem41} holds. Hence, according to Lemma \ref{lem41}, there exists a unique uniformly asymptotically stable almost periodic solution $x(t)$ of system $\eqref{e12}$, and $x(t)\in\Omega$.
\end{proof}

\section{An example}
\setcounter{equation}{0}
{\setlength\arraycolsep{2pt}}
 \indent

Consider the following  single-species system with impulsive effects on time scale $\mathbb{T}$:
{\setlength\arraycolsep{2pt}
\begin{eqnarray}\label{e51}
 \left\{%
\begin{array}{lcrcl}
x^{\Delta}(t)=a(t)-b(t)e^{x(t)}-\displaystyle\frac{c(t)}{d(t)+m(t)e^{x(t)}},\,\,t\neq t_{k},\,\,t\in[0, +\infty)_{\mathbb{T}},\\
x(t_{k}^{+})=x(t_{k})\ln(1+\lambda_{k}),\,\,t=t_{k},\,\,k\in\mathbb{N},
\end{array}\right.
\end{eqnarray}}
where $\mathbb{T}=\mathbb{R}$ or $\mathbb{T}=\mathbb{Z}$, and
\[a(t)=0.4-0.01\sin(\sqrt{2}t),\,\,b(t)=0.34,\,\,c(t)=0.009+0.001\cos(\sqrt{5}t),\]
\[d(t)=1.05+0.05\cos(\sqrt{5}t),\,\,m(t)=0.2+0.03\sin(\sqrt{3}t),\,\,\lambda_{k}=e^{(0.9)^{\frac{1}{2^{k}}}}-1,\,\,t_k=k.\]
 By calculating, we have
\[{a^u}=0.41,\,\,{a^l}=0.39,\,\, {b^u}={b^l}=0.34,\,\,{c^u}=0.01,\,\,{c^l}=0.008,\]
\[{d^u}=1.1,\,\,{d^l}=1,\,\, {m^u}=0.23,\,\, {m^l}=0.17,\,\,r\approx0.949,\]
so we obtain
\[
x^{\ast}=\frac{ a^{u}-b^{l}}{b^{l}}\approx0.206,\,\,
x_{\ast}=\ln\bigg(\frac{(a^{l}-c^{u})\times0.949}{b^{u}}\bigg)\approx0.059.\]
When  $\mathbb{T}=\mathbb{R}$, then $\mu(t)=0$ and $\gamma\approx0.723$. When   $\mathbb{T}=\mathbb{Z}$, then $\mu(t)=1$ and $\gamma\approx0.547$. So, in both cases $\mathbb{T}=\mathbb{R}$ and $\mathbb{T}=\mathbb{Z}$, all conditions in Theorem \ref{thm41} are satisfied, thus \eqref{e51} has a unique positive almost periodic solution, which is uniformly asymptotically stable.

\section{Conclusion}
\indent

In this paper, some new comparison theorems and a Massera type theorem for impulsive dynamic equations on time scales are established. Based on these results, the existence and uniformly asymptotic stability of unique positive almost periodic solution of a single-species system with impulsive effects on time scales is obtained. Our results of this paper are completely new and can be used to study other types impulsive dynamic equation models on time scales.

\end{document}